\newcommand{\mc}[1]{\mathcal{#1}}
\newcommand{\ov}[1]{\overline{#1}}
\newcommand{\op}[1]{\operatorname{#1}}
\newcommand{\ovop}[1]{\ov{\op{#1}}}
\newcommand{\frg}{\mathfrak{g}}
\newcommand{\ovmc}[1]{\ov{\mc{#1}}}
\newcommand\tensor{\otimes}
\newcommand{\sL}{\mathfrak{sl}}
\newtheorem{theorem}{Theorem}[section]
\newtheorem*{theorem*}{Theorem}
\newtheorem{corollary*}{Corollary}
\newtheorem{remark}[theorem]{Remark}
\newtheorem{mainconjecture*}[theorem]{Main Conjecture}
\newtheorem*{lemma*}{Lemma}
\newtheorem{claim}[theorem]{Claim}
\newtheorem*{claim*}{Claim}
\newtheorem*{approach*}{Approach}
\newtheorem*{example*}{Example}
\newtheorem{Def/Prop}{Definition/Proposition}
\newtheorem{remark/definition}{Remark/Definition}
\newtheorem{example}[theorem]{Example}
\newtheorem*{proof*}{proof}
\newtheorem*{proofsketch*}{Sketch of proof:}
\newtheorem*{prop*}{Proposition}
\newtheorem{proposition}[theorem]{Proposition}
\newtheorem{defn/thm}[theorem]{Definition/Theorem}
\newtheorem{definition/lemma}[theorem]{Definition/Lemma}
\newtheorem{definition}[theorem]{Definition}
\newtheorem{example/lemma}[theorem]{Example/Lemma}
\begin{document}

\pagenumbering{arabic}

\title[A. Gibney, and S. Mukhopadhyay]{On higher Chern classes of vector bundles of conformal blocks}
\author{A. Gibney and S. Mukhopadhyay}
\date{\today}

\begin{abstract}Here we consider higher Chern classes of vector bundles of conformal blocks on $\ovop{M}_{0,n}$, giving explicit formulas for them, and extending various results that hold for first Chern classes to them.  We use these classes to form a full dimensional subcone of the Pliant cone on $\ovop{M}_{0,n}$.\end{abstract}
\maketitle
\section{Introduction}
The moduli stack $\ovmc{M}_{g,n}$, parametrizing flat families of Deligne-Mumford stable n-pointed curves of genus $g$, carries vector bundles of conformal blocks $\mathbb{V}$ constructed using  representation theory \cite{TUY}.   When $g=0$, the bundles are globally generated \cite{Fakh}, and their Chern classes on the moduli space $\ovop{M}_{0,n}$ have positivity properties: First Chern classes are base point free, and (products of) higher Chern classes are elements of the Pliant cone.

 Here we explain how a number of vanishing results and identities governing  first Chern classes of vector bundles of conformal blocks on $\ovop{M}_{0,n}$ may be extended simply to higher Chern classes:
\begin{itemize}
\item an explicit formula for the m-th Chern class $\op{c}_m(\mathbb{V})$ on $\ovop{M}_{0,n}$ is given  in $\S$ \ref{CC}, using  an expression for the total Chern character $\op{Ch}(\mathbb{V})$ from \cite{MOPPZ}; \item additive and critical level identities governing higher Chern classes are given  in $\S$ \ref{CL}; and \item   criteria for higher Chern classes to be extremal in the nef cone are given in $\S$ \ref{extremality}.
\end{itemize}

The pseudo-effective cone $\ovop{Eff}_m(\op{X})$ is the closure of the cone generated by classes of $m$-dimensional subvarieties on a projective variety $\op{X}$.  One can define higher codimension analogues of cones of nef divisors by taking  $\op{Nef}^m(\op{X})$  to be dual to $\ovop{Eff}_m(\op{X})$.  Products of Chern classes of vector bundles of conformal blocks are elements of $\op{Nef}^m(\ovop{M}_{0,n})$. 

 Properties satisfied by effective divisors can fail 
for  cycles of codimension $m >1$  \cites{VoisinNef, DELV, Ottem, LO}.  For instance, while nef divisors are pseudoeffective,  there are varieties for which 
$\ovop{Eff}^{k}(X) \subsetneq \op{Nef}^k(X)$.  Also, while $\op{Nef}^1(X)$ is polyhedral for any Mori Dream Space $X$, there are examples of Mori Dream spaces for which $\op{Nef}^m(X)$ are not finitely generated for  $1 < m$.   On the other hand, in \cite{CLO}, it was shown that cones of  higher
codimension cycles can, in some circumstances, behave better than the cones of divisors. 

To more accurately capture properties of positive cycles of higher codimension, Fulger and Lehmann  have introduced three sub-cones of the nef cone $\op{Nef}^m(\op{X})$ \cite{LF}. The smallest of these, the Pliant cone $\op{Pl}^m(\op{X})\subset \op{Nef}^m(\op{X})$, is the closure of the cone generated by monomials in Schur classes of globally generated vector bundles on $\op{X}$.  

Products of Chern classes of vector bundles of conformal blocks  lie in the Pliant cone for $\ovop{M}_{0,n}$.  In Section \ref{FakhruddinBasis} we use vector bundles of conformal blocks to form a full dimensional subcone of  $\op{Pl}^m(\ovop{M}_{0,n})$. In Section \ref{OtherBasis}, we give subcones of  the $\op{S}_n$-invariant Pliant cone $\op{Pl}^m(\ovop{M}_{0,n})^{\op{S}_n}$.

\section{Vector bundles of conformal blocks}\label{VBS}We begin with a short description of the definition of vector bundles of conformal blocks. Original sources for the construction are \cites{Tsu, TUY}.
 The facts we use are given, primarily in the notation of \cite{Fakh}.  

 \subsection{Basic ingredients} A vector bundle of conformal blocks $\mathbb{V}(\mathfrak{g},\vec{\lambda},\ell)$ is determined by a simple Lie algebra $\mathfrak{g}$, a positive integer $\ell$, and an $n$-tuple $\vec{\lambda}=(\lambda_1,\ldots,\lambda_n)$ of dominant weights for $\mathfrak{g}$ at level $\ell$.  
Let $\mathfrak{h} \subset  \mathfrak{g}$ be a Cartan subalgebra. A dominant integral weight  $\lambda_i \in \mathfrak{h}^*$ is at level $\ell$ as long as $(\theta,\lambda_i)\leq \ell$, where $\theta\in\mathfrak{h}^*$ is the highest root of $\mathfrak{g}$, and  $( \ , \ )$ is  the Killing form, normalized so that  $(\theta,\theta)=2$.  
In \cite{BF} vector  bundles of conformal blocks were realized  as the push forward of line a bundle on the (relative) moduli stack of principal bundles on the universal curve.   Here we sketch a  construction of  fibers at smooth curves using affine Lie algebras. This is carried out in families in \cite{Sorger}.
 \subsection{Vector spaces of conformal blocks} Let $\mathfrak{g}$ be a simple Lie algebra, and let $\hat{\mathfrak{g}}:=\mathfrak{g}\otimes \mathbb{C}((t))\oplus \mathbb{C}c$, be the corresponding {\em affine Kac-Moody Lie algebra}.  Here, $c$ belongs to the center of $\hat{\mathfrak{g}}$, and the Lie bracket of 
 $\hat{\mathfrak{g}}$ is given by the following rule: 
$$[X\otimes f, Y\otimes g]=[X,Y]\otimes fg + (X,Y)\operatorname{Res}_{t=0}g\frac{df}{dt}c,$$ where $X$, $Y$ are elements of the Lie algebra $\mathfrak{g}$ and $f$, $g$ are in $\mathbb{C}((t))$.  We denote the set dominant integral weights of $\mathfrak{g}$ of level $\ell$ by $P_{\ell}(\mathfrak{g})$. 
 For each weight  $\lambda_i$, there is a unique and irreducible finite dimensional $\mathfrak{g}$-module $\op{V}_{\lambda_i}$.  
 For each $\lambda \in P_{\ell}(\mathfrak{g})$, there exists and unique irreducible, highest weight, integrable, $\hat{\mathfrak{g}}$-module $\op{H}_{\lambda}$ such that $\op{H}_{\lambda}$ is infinite dimensional,  $c$ acts on $\op{H}_{\lambda}$ by a scalar $\ell$; $V_{\lambda}\hookrightarrow \op{H}_{\lambda}$.

 Let $\op{C}$ be a smooth connected projective curve over $\mathbb{C}$, and let $U= \op{C}\setminus \{p_1,\ldots,p_n\}$. Without loss of generality, we may assume $U$ is affine (which will be true after possibly adding a point with a trivial representation, not changing the vector space described below). By $\mathfrak{g}(U)$ we mean the Lie algebra $\mathfrak{g} \otimes \mathcal{O}_{\op{C}}(U)$.  Let  $p_1$, $\ldots$, $p_n \in \op{C}$ be n smooth points and let $\lambda_1,\ldots, \lambda_n \in \mathcal{P}_{\ell}(\mathfrak{g})$.  Choose a local coordinate $\xi_i$ at each point $p_i$, and denote by $f_{p_i}$ the Laurant expansion of any element $f \in \mathcal{O}_{\op{C}}(\op{C}\setminus \{p_1,\ldots, p_n\})$.  Then for each $i$, there is a ring  homomorphism
$$\mathcal{O}_{\op{C}}(U)\to \mathbb{C}((\xi_i)), \ \ f \mapsto f_{p_i},$$ 

This defines a map (not a Lie algebra embedding)
$$\mathfrak{g}(U) \to \hat{\mathfrak{g}}_i \ \ 
\op{X}\tensor f \mapsto (\op{X}\tensor f_{p_i}, 0).$$
\noindent
 Set $\op{H}_{\vec{\lambda}} = H_{\lambda_1}\tensor \cdots H_{\lambda_n}$ and define the following by restriction of the action of $\hat{\mathfrak{g}}_i$ on $H_{\lambda_i}$:
\begin{equation}\label{action}
\mathfrak{g}(U) \times \op{H}_{\vec{\lambda}} \to \op{H}_{\vec{\lambda}} \ \ (X\tensor f, w_1\tensor \cdots w_n) \mapsto \sum_{i=1}^n w_1 \tensor \cdots w_{i-1}\tensor (X\tensor f_{p_i})\cdot w_i \tensor w_{i+1}\tensor \cdots w_n.
\end{equation}

It is possible, using that $\sum_{1\le i \le n} (X,Y)\op{Res}_{\xi_i=0}g_{p_i}df_{p_i}=0$,  to show that Equation \ref{action} defines an action of $\mathfrak{g}(U)$ on $\op{H}_{\vec{\lambda}}$. The fiber $\mathbb{V}(\mathfrak{g},\vec{\lambda},\ell)|_{(C;\vec{p})}$ is the space of coinvariants of the $\mathfrak{g}(U)$-module $\op{H}_{\vec{\lambda}}$,
which is the largest quotient of $\op{H}_{\vec{\lambda}}$ on which $\mathfrak{g}(U)$ acts trivially.

\begin{definition}
$\mathbb{V}(\mathfrak{g},\vec{\lambda},\ell)|_{(C;\vec{p})} = [\op{H}_{\vec{\lambda}}]_{\mathfrak{g}(U)}.$
\end{definition}

\section{Higher Chern classes from Chern character formula}\label{CC} Many  properties now understood about first Chern classes of vector bundles of conformal blocks were found using Macaulay 2 \cite{M2}, mainly with  the packages \cite{Schur}, and \cite{ConfBlocks}, the latter of which implements the formulas of Fakhruddin \cite{Fakh}.
We were therefore motivated to find an explicit expression for the higher Chern classes, with the hope that perhaps with the help of such software, future experimentation on higher Chern classes can be carried out.

In  \cite{MOPPZ}, a beautiful and simple formula for the total Chern character of a general Verlinde bundle on  $\ovmc{M}_{g,n}$ for all $g$, and all $n$ is given.  These bundles, over $\ovop{M}_{0,n}$ are dual to the bundles of conformal blocks.  Here we use this to find an explicit formula for $c_k(\mathbb{V})$ on $\ovop{M}_{0,n}$.

\subsection{Notation for the statement of Theorem \ref{Chern}}\label{ChernNotation}
Given a vector bundle of conformal blocks  $\mathbb{V}=\mathbb{V}(\mathfrak{g},\vec{\lambda},\ell)$, following  \cite{MOPPZ}, we set $w(\lambda)=\frac{(\lambda, \lambda+2\rho)}{2(\mathfrak{g}^*+\ell)}$,
where $\mathfrak{g}^*$ is the dual Coxeter number, and $\rho$ is  half of the sum of the positive roots.

In Theorem \ref{Chern} we express $c_m(\mathbb{V})$ as a linear combination of products of $\psi$ classes and boundary cycles $\delta_{J_1}\delta_{J_2}\cdots \delta_{J_m}$, where the $J_i$
are nested sets formed using a collection of $m$ disjoint non-empty sets $I_1$, $I_2$,$\ldots$, $I_m$ such that 
$$J_1=I_1, \ J_2=I_1\cup I_2, \ J_3=I_1\cup I_2 \cup I_3, \ \ldots, J_m=I_1\cup \cdots \cup I_m, \ \mbox{ and } 1 \in J_m^C, \  |J_m|\ge 2.$$
Then for a collection of $m$ allowable weights $\vec{\mu}=(\mu_1,\ldots, \mu_m)$, and for $j \in \{2,\ldots, m\}$, we set 
$$\mathbb{V}_{\vec{\mu}}(\lambda_{J_j})=\mathbb{V}(\mathfrak{g}, \{\lambda_i | i \in J_j\}\cup \{\mu_{j-1}^*, \mu_{j}\}, \ell),$$
while setting
$\mathbb{V}_{\vec{\mu}}(\lambda_{J_1})=\mathbb{V}(\mathfrak{g}, \{\lambda_i | i \in J_1\}\cup \{\mu_1\}, \ell), \ \mbox{ and } \ 
\mathbb{V}_{\vec{\mu}}(\lambda_{J_m^c})=\mathbb{V}(\mathfrak{g}, \{\lambda_i | i \in J_m^C\}\cup \{\mu_{m}^*\}, \ell)$.

\subsection{Statement of the formula}In  Theorem \ref{Chern}, we use the notation from Section \ref{ChernNotation}. By the splitting principal,  $[\op{Ch}(\mathbb{V})]_k=\frac{1}{k!}p_k(\mathbb{V})$, where $p_k(\mathbb{V})$ are the $k$-th power sums of the Chern roots of the vector bundle  $\mathbb{V}=\mathbb{V}(\mathfrak{g},\vec{\lambda},\ell)$,  and
\begin{equation}\label{Newton}
c_m(\mathbb{V}(\mathfrak{g},\vec{\lambda},\ell)) =  (-1)^m \sum_{\substack{(m_1,\ldots, m_j)\in \mathbb{Z}^j_{\ge 0} \\ m_1+2m_2 + \cdots + j m_j=m}}  \prod_{k=1}^j \frac{(- \op{p}_k(\mathbb{V}))^{m_k}}{m_k! k^{m_k}}.
\end{equation}
 Equation \eqref{Newton} is proved in a number of places, eg. \cite{Mead}. We give an explicit formula for the power sums  in Theorem \ref{Chern}.
 
 \begin{theorem}\label{Chern} 
\begin{equation}\label{PS}
\op{p}_k(\mathbb{V})=\sum_{m=0}^k\sum_{\vec{k}, J} \beta_{J}^{\vec{k}} \ \  \psi_1^{k_1} \psi_2^{k_2} \cdots \psi_n^{k_n} \delta_{J_1}^{k_{n+1}}\cdots \delta_{J_m}^{k_{n+m}},
\end{equation}
\begin{equation}\label{Formula}
\beta_{J}^{\vec{k}}=(-1)^{\sum_{j=1}^m k_{n+j}}{k\choose{k_1,\ldots, k_{n+m}}} \  \prod_{i=1}^n w(\lambda_i)^{k_i} \sum_{\stackrel{\vec{\mu}=(\mu_1,\ldots,\mu_m)}{ \in \mathcal{P}_{\ell}(\mathfrak{g})^m}} \prod_{1 \le j \le m+1 } \  w(\mu_j)^{k_{n+j}} \op{rk}(\mathbb{V}_{\vec{\mu}}(\lambda_{J_j})).
\end{equation}
In Eq \ref{PS} we sum over $\vec{k}=\{k_1,\ldots, k_{n+m}\}$ where
$k_1, \ldots, k_{n} \ge 0;  \mbox{ and } \  k_{n+1},\dots, k_{n+m} \ge 1,  \mbox{ such that } \ \sum_{i=1}^{n+m}k_i=k,$ and  $J=(J_1,\ldots,J_m)$. We set $J_{m+1}=J_m^C$ as described in Section \ref{ChernNotation},  and in Eq \ref{Formula},  $w(\mu_{m+1})^{k_{n+m+1}} =1$.
\end{theorem}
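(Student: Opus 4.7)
The plan is to extract Equation \ref{PS} directly from the Chern character formula of \cite{MOPPZ}. By the splitting principle noted just before the theorem, $p_k(\mathbb{V}) = k! \, [\op{Ch}(\mathbb{V})]_k$, so the task reduces to identifying the degree-$k$ component of the MOPPZ expression on $\ovop{M}_{0,n}$, and rewriting it in the basis of monomials $\psi_1^{k_1}\cdots \psi_n^{k_n} \delta_{J_1}^{k_{n+1}}\cdots \delta_{J_m}^{k_{n+m}}$.

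First, I would recall the MOPPZ formula in the genus zero case and organize it as a sum, indexed by the number $m$ of boundary factors, of exponentials of divisor classes. The degree-one, marked-point contribution from each $\lambda_i$ is $w(\lambda_i)\,\psi_i$; each boundary divisor $\delta_J$ is weighted by $-w(\mu)\,\delta_J$ for some intermediate $\mu\in\mathcal{P}_\ell(\frg)$ introduced at the corresponding node. Iterating factorization $m$ times produces precisely the nested boundary strata $\delta_{J_1}\cdots \delta_{J_m}$ of the statement: once one factors along $\delta_{J_1}$, the next admissible boundary on the component opposite the distinguished point ($1\in J_m^c$) lies inside $J_1$, and so on, with the incremental sets $I_j = J_j\setminus J_{j-1}$ determining which marked points lie on each successive subtree. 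At the $j$-th node the factorization introduces a pair of dual weights $\mu_{j-1}^*,\mu_j$ on the two sides, and on the ``inner'' components one keeps only the rank $\op{rk}(\mathbb{V}_{\vec\mu}(\lambda_{J_j}))$, while on the component carrying the $\psi$-classes one continues to carry the Chern character.

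Second, I would extract the degree-$k$ piece by multinomial expansion. For a fixed choice of $m$, $J=(J_1,\dots,J_m)$, and $\vec\mu=(\mu_1,\dots,\mu_m)$, the exponential
\[
\exp\Bigl(\sum_i w(\lambda_i)\psi_i \;-\; \sum_j w(\mu_j)\delta_{J_j}\Bigr)
\]
contributes to degree $k$ a term with multinomial coefficient $\binom{k}{k_1,\dots,k_{n+m}}$, Chern monomial $\psi_1^{k_1}\cdots \delta_{J_m}^{k_{n+m}}$, and numerical factor $\prod_i w(\lambda_i)^{k_i}\prod_j (-w(\mu_j))^{k_{n+j}}$. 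Multiplying by $k!$ cancels the $1/k!$ in the exponential and clears the Chern-character/power-sum conversion, and the sign $(-1)^{\sum_{j=1}^m k_{n+j}}$ in Equation \ref{Formula} precisely absorbs the boundary signs. The condition $k_{n+1},\dots,k_{n+m}\ge 1$ ensures that each $\delta_{J_j}$ actually appears, which is what justifies summing over $m$ from $0$ to $k$ as an organization by the number of distinct boundary factors; the convention $w(\mu_{m+1})^{k_{n+m+1}} = 1$ accounts for the outermost complementary component, which contributes only its rank $\op{rk}(\mathbb{V}_{\vec\mu}(\lambda_{J_m^c}))$.

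The main obstacle I foresee is the combinatorial bookkeeping: verifying that the iterated factorization along $m$ nested degenerations produces precisely the ranks $\op{rk}(\mathbb{V}_{\vec\mu}(\lambda_{J_j}))$ paired with weights $\mu_{j-1}^*,\mu_j$ as in Section \ref{ChernNotation}, and that the enumeration of strata by ordered nested chains $J_1\subset\cdots\subset J_m$ with $1\in J_m^c$ and $|J_m|\ge 2$ exhausts each term of the MOPPZ formula with the correct multiplicity. Once this correspondence is pinned down, Equations \ref{PS} and \ref{Formula} follow by comparing the degree-$k$ coefficient on both sides, and the expression for $c_m(\mathbb{V})$ in Equation \ref{Newton} is then obtained by direct substitution into Newton's identity.
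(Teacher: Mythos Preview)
Your approach is the same as the paper's: read off $p_k(\mathbb V)=k!\,[\op{Ch}(\mathbb V)]_k$ from the MOPPZ Chern character formula, organize the sum by the number $m$ of edges in the dual tree, and expand multinomially to isolate the degree-$k$ piece. The paper records the vertex contributions directly as the ranks $\op{rk}(\mathbb V_{\vec\mu}(\lambda_{I_j}))$ from MOPPZ rather than via ``iterated factorization,'' but that is only a difference in language.

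One technical step you gloss over, and which the paper carries out, is the form of the edge contribution. In MOPPZ the edge term is $f(\psi_e'+\psi_e'')$ with $f(t)=(1-e^{wt})/t$, a class on the product of smaller moduli spaces; only after pushing forward by $i_{\Gamma}$, using the self-intersection identity $(i_\Gamma)_*(-\psi_e'-\psi_e'')^m=\delta_e^{m+1}$ quoted from \cite{MOPPZ}, does it become $\exp(-w(\mu_e)\,\delta_e)-1$ on $\ovop{M}_{0,n}$. That ``$-1$'' is precisely what enforces $k_{n+j}\ge 1$, and it means the correct object to expand is
\[
\exp\Bigl(\sum_i w(\lambda_i)\psi_i\Bigr)\;\prod_{j=1}^m\Bigl(\exp(-w(\mu_j)\,\delta_{J_j})-1\Bigr),
\]
not the single exponential $\exp\bigl(\sum_i w(\lambda_i)\psi_i-\sum_j w(\mu_j)\delta_{J_j}\bigr)$ you wrote; the latter would include the $k_{n+j}=0$ terms and overcount. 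Once this is corrected, your multinomial bookkeeping goes through exactly as in the paper.
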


\begin{proof}  
Let $\mathbb{V}=\mathbb{V}(\mathfrak{g},\vec{\lambda},\ell)$.  To use  \cite{MOPPZ}, since we are working on $\ovop{M}_{0,n}$,  we note that the part of the formula with $\lambda$ contribution is $\op{exp}(0)=1$.  Moreover,  for graphs $\Gamma$ associated to stable pointed rational curves,   $h^1(\Gamma)=0$, and there are no automorphisms.  Moreover, on $\ovop{M}_{0,n}$, the Verlinde bundle is dual to $\mathbb{V}$, and we use that $p_k(\mathbb{V}^*)=(-1)^kp_k(\mathbb{V})$.  After modification of a minor typo in \cite{MOPPZ}, the main result \cite{MOPPZ} gives:
$$\op{Ch}(\mathbb{V})=\sum_{\Gamma, \vec{\mu}} (i_{\Gamma})_* \ \Big(\ \prod_{\stackrel{ \ell \in \Gamma}{legs}} \  \mbox{cont}(\ell) 
\ \prod_{\stackrel{ \nu \in \Gamma}{vertices}}  \mbox{cont}(\nu) \prod_{\stackrel{ e \in \Gamma}{edges}}  \ \mbox{cont}(e) \ \Big),$$
where one sums over all graphs $\Gamma$ dual to stable $n$-pointed curves of genus zero, and vectors $\vec{\mu}$ of attaching weights. The coefficients $\mbox{cont}(\ell)$ contributed by the legs, 
$\mbox{cont}(e)$,  by the edges, and  $\mbox{cont}(\nu)$,  by vertices of the $\Gamma$ are described below.

For the contribution of the legs, we expand the power series, given  $\Gamma$:
$$\prod_{\stackrel{\ell \in \Gamma}{legs}}\mbox{cont}(\ell) =\prod_{i=1}^n \op{exp}(w(\lambda_i) \psi_i) =  \op{exp}( \sum_{i=1}^n w(\lambda_i) \psi_i).$$

For  the contribution of the edges, we write
$$f(t)=\frac{1-e^{wt}}{t} = \frac{1-\sum_{m=0}^{\infty}\frac{(wt)^m}{m!}}{t} = - \sum_{m=0}^{\infty} \frac{w^{m+1} t^m}{ (m+1)!} = \sum_{m=0}^{\infty} \frac{(-1)^{m+1}w^{m+1}(-t)^m}{(m+1)!}.$$
Now given an edge $e \in \Gamma$, with corresponding attaching weight $\mu_e$, 
$$\mbox{cont}(e) =f(\psi_{e}'+\psi_{e}") =  \sum_{m=0}^{\infty} \frac{(-1)^{m+1}w(\mu_e)^{m+1}(-\psi_{e}'-\psi_{e}")^m}{(m+1)!}$$
and using the key identity from \cite{MOPPZ}, at the bottom of page 15, the image $i_{{\Gamma}_*}$ is
$$  \sum_{m=0}^{\infty} \frac{(-1)^{m+1}w(\mu_e)^{m+1}(\delta_e)^{m+1}}{(m+1)!}
=  \sum_{m=0}^{\infty} \frac{(-w(\mu_e) \ \delta_e)^{m+1}}{(m+1)!} = \op{exp}(-w(\mu_e) \ \delta_e)-1.$$

Expanding this out,  we  can write

\begin{multline}
\op{Ch}(\mathbb{V})=\sum_{m\ge0} \sum_{\stackrel{\Gamma \mbox{\tiny{ with m edges}}}{ \vec{\mu}=\{\mu_{e_1},\ldots,\mu_{e_m}\}}}
 \ \prod_{\stackrel{ \nu \in \Gamma}{vertices}}  \mbox{cont}(\nu) \op{exp}(\sum_{i=1}^n w(\lambda_i) \psi_i )\prod_{j=1}^m\big(\op{exp}(-w(\mu_{e_j}) \ \delta_{e_j})-1\big) \\
 =\sum_{m\ge 0}
 \sum_{\stackrel{\Gamma \mbox{\tiny{ with m edges}}}{ \vec{\mu}=\{\mu_{e_1},\ldots,\mu_{e_m}\}}}
 \ \prod_{\stackrel{ \nu \in \Gamma}{vertices}}  \mbox{cont}(\nu) \sum_{k \ge m} \frac{1}{k!} \sum_{\vec{k}=\{k_1,\ldots,k_{n+m}\}}
  {k\choose{k_1,\ldots,k_{n+m}}}
 \prod_{i=1}^n (w(\lambda_i) \ \psi_i)^{k_i}  \prod_{j=1}^{m} (-w(\mu_{e_j}) \ \delta_{e_j})^{k_{n+j}}.
 \end{multline}
 Above we sum over $\vec{k}=\{k_1,\ldots,k_{n+m}\}$, such that  
 $$k_1, \ldots, k_{n} \ge 0; \ \ k_{n+1},\dots, k_{n+m} \ge 1, \ \mbox{and } \sum_{i=1}^{n+m}k_i=k,$$
 and
  $\vec{\mu}=\{\mu_{e_1},\ldots,\mu_{e_m}\} \in \mathcal{P}_{\ell}(\mathfrak{g})^m$.

We write $J_i = I_1 \cup \cdots \cup I_i$, where the sets $I_1$, $I_2$, $\ldots$, $I_m$, and $I_{m+1}=(I_1\ldots I_m)^c$ form a partition of $[n]$, with $I_j\ne \emptyset$ for all $j$. For $(C,p_1,\ldots, p_n) \in \Delta_{J_1} \cap \cdots \cap \Delta_{J_m}$, and $\Gamma(C)$ has vertices $v_i$ corresponding to each smooth component $C_i$ of $C$, then we can label the vertices so that
\begin{itemize}
\item $v_1$ is connected to $v_2$ by the edge $e_1$ labeled with an attaching weight $\mu_1$;
\item for $i\in \{2,\ldots, m\}$, the vertex $v_i$ is connected by and edge to $v_{i-1}$ labeled with $\mu_{i-1}^*$, and connected by an edge labeled with $\mu_{i+1}$ to the vertex  $v_{i+1}$;
\item the last vertex $v_{m+1}$ is connected to $v_{m}$ by an edge labeled with an attaching weight $\mu_{m}^*$;
\item each vertex $v_i$ has $|I_i|$ adjacent half edges labeled with the given weights $\lambda(I_i)=\{\lambda_j : j \in I_i\}$
\end{itemize}

We set 
$$\mathbb{V}_{\vec{\mu}}(\lambda_{I_i})=\mathbb{V}(\mathfrak{g}, \{\lambda_j : j \in I_i\}\cup \{\mu_{i-1}^*,\mu_{i}\}, \ell)),$$
where $\vec{\mu}=\{\mu_{i-1}^*,\mu_i\}$, and if $i=1$, then $\mu_{i-1}^*=0$, and if $i=m+1$, then $\mu_{i}=0$.

When we solve for the degree one part $[\op{Ch}(\mathbb{V})]_1$, we are concerned with summing over trees having either zero or one edges. 
If there is just one vertex $v$ and no edges, then there is no need for attaching weights and:
$$cont(v)=\op{rank}(\mathbb{V})=\op{rank}(\mathbb{V}(\mathfrak{g}, \{\lambda_1,\ldots, \lambda_n\}, \ell)).$$

If there is just one edge $e$, with attaching weight $\mu_{e}$, then we work with partitions of the marked points into two sets $I$ and $I^C$, each having at least two elements.   One obtains:
\begin{equation}\label{First}[\op{Ch}(\mathbb{V})]_1=
  \op{rk}(\mathbb{V})  \sum_{i=1}^n w(\lambda_i)\psi_i
 -  \sum_{\stackrel{I \subset [n], 1 \in I^C}{ \mu_e \in \mathcal{P}_{\ell}(\mathfrak{g})}}\ w(\mu_e) \  \op{rank}(\mathbb{V}_{\mu_e}(\lambda_{I})) \  \op{rank}(\mathbb{V}_{\mu^*_e}(\lambda_{I^C}))   \ \delta_I.
 \end{equation}
Equation \ref{First} is therefore the same as Fakhruddin's formula \cite{Fakh}, as rewritten in \cite{Muk3}.

In general, for  the degree k part $[\op{Ch}(\mathbb{V})]_k$, we sum over trees having $m \le k$ edges.  By doing this, one obtains 
$$[\op{Ch}(\mathbb{V})]_k= \sum_{m=0}^k\sum_{\vec{k}, J} \alpha_{J}^{\vec{k}} \ \  \psi_1^{k_1} \psi_2^{k_2} \cdots \psi_n^{k_n} \delta_{J_1}^{k_{n+1}}\cdots \delta_{J_m}^{k_{n+m}},$$
where we sum over $\vec{k}=\{k_1,\ldots, k_{n+m}\}$ where
$$k_1, \ldots, k_{n} \ge 0;  \mbox{ and } \  k_{n+1},\dots, k_{n+m} \ge 1,  \mbox{ such that } \ \sum_{i=1}^{n+m}k_i=k,$$ and 
$J=(J_1,\ldots,J_m)$,  as above, and where
\begin{equation}\label{Last}
\alpha_{J}^{\vec{k}}=\frac{(-1)^{{\sum_{j=1}^m k_{n+j}}}}{k!}{k\choose{k_1,\ldots, k_{n+m}}} \sum_{\stackrel{\vec{\mu}=(\mu_1,\ldots,\mu_m\})} {\in \mathcal{P}_{\ell}(\mathfrak{g})^m}} \prod_{i=1}^n w(\lambda_i)^{k_i}
\prod_{j=1}^{m+1}w(\mu_j)^{k_{n+j}} \op{rk}(\mathbb{V}(\mathfrak{g},\lambda(I_j) \cup \{\mu_{j-1}^*,\mu_j\},\ell )).
\end{equation}
Here again, in Eq \ref{Last}, we assume things are labeled so that if $j=1$, then $\mu_{j-1}^*=0$, while if $j=m+1$, we set $w(\mu_{m+1})^{k_{n+m+1}}=1$. 
Since $[\op{Ch}(\mathbb{V})]_k=\frac{1}{k!}p_k(\mathbb{V})$, multiplying  by $k!$ to obtain $\beta_{I}^{\vec{k}}$, we are finished.
\end{proof}

\begin{remark} Assuming a mild generalization of  \cite{EV}, Corollary B.3, (see also \cite{Fakh}, Theorem 3.2), in principal one can derive a formula for the Newton classes $p_k(\mathbb{V})$. We have checked that the coefficients so obtained agree with the statement given in Theorem \ref{Chern}.

\end{remark}

\section{Additive and critical level identities for higher Chern classes}\label{CL}
A number vanishing statements and relations have been found for first Chern classes of vector bundles of conformal blocks. Here we give higher Chern class analogues of a number of them.
\subsection{Additive identities}   In \cite{BGMB}, Proposition $4.1$, it was proved that  if certain ranks are satisfied, additive relations hold for first Chern classes. The following is an additive identity for higher Chern classes.

\begin{proposition} \label{Additive} For $\vec{\nu}_1\in \mc{P}_{\ell_1}(\mathfrak{g})$, $\vec{\mu}_1\in \mc{P}_{m_1}(\mathfrak{g})$, and $\vec{\nu}_1+\vec{\mu}_1\in \mc{P}_{\ell_1+m_1}(\mathfrak{g})$, suppose:
  $$\op{rk}(\mathbb{V}(\mathfrak{g}, \vec{\nu}_1,\ell_1))=1, \mbox{ and } \op{rk}(\mathbb{V}(\mathfrak{g}, \vec{\mu}_1, m_1))=\op{rk}(\mathbb{V}(\mathfrak{g}, \vec{\nu}_1+ \vec{\mu}_1,\ell_1+m_1))=\delta.$$
   Then
$$\op{c}_m(\mathbb{V}(\mathfrak{g}, \vec{\nu}_1+ \vec{\mu}_1,\ell_1+m_1))=\sum_{k=0}^m{m+\delta-k\choose k} \   c_1(\mathbb{V}(\mathfrak{g}, \vec{\nu}_1,\ell_1))^k \ c_{m-k}(\mathbb{V}(\mathfrak{g}, \vec{\mu}_1,m_1)).
$$
\end{proposition}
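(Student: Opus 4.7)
The plan is to reduce the claimed identity to an equivalent generating-function form, and then verify it at the level of power sums using Theorem \ref{Chern} together with the rank hypotheses.

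First, I would observe that the stated identity is equivalent, by matching coefficients of $t^m$ on the two sides, to the total Chern polynomial identity
$$c(V)(t) \;=\; (1 + c_1(L)\, t)^{\delta}\; c(E)\bigl(t (1 + c_1(L)\, t)\bigr),$$
where I abbreviate $V=\mathbb{V}(\mathfrak{g},\vec{\nu}_1+\vec{\mu}_1,\ell_1+m_1)$, $L=\mathbb{V}(\mathfrak{g},\vec{\nu}_1,\ell_1)$, $E=\mathbb{V}(\mathfrak{g},\vec{\mu}_1,m_1)$, and $c(X)(t)=\sum_k c_k(X)t^k$. Passing to logarithms turns this into an equivalent identity expressing the power sums $p_k(V)$ as explicit polynomials in $c_1(L)$ and the $p_j(E)$. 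Via Newton's identities, that power-sum identity is equivalent to the Chern class identity in the statement.

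The core of the proof is then to verify the resulting power-sum identity using the explicit formula of Theorem \ref{Chern}. Applied to $V$, the theorem expresses $p_k(V)$ as a sum over nested set collections $J$, attaching weights $\vec{\mu}$, and multi-indices $\vec{k}$, with coefficients involving $w(\nu_i+\mu_i)^{k_i}$ at level $\ell_1+m_1$ and rank factors $\op{rk}(\mathbb{V}_{\vec{\mu}}(\lambda_{J_j}))$ at level $\ell_1+m_1$. The rank hypotheses enter in two ways: first, since $\op{rk}(L)=1$, the analogous expansion for $L$ collapses to pure powers of $c_1(L)$; and second, the equality $\op{rk}(E)=\op{rk}(V)=\delta$ should imply that for each fixed choice of $J$ and $\vec{\mu}$, the rank factor $\op{rk}(\mathbb{V}_{\vec{\mu}}(\lambda_{J_j}))$ at the combined level factors multiplicatively through the corresponding rank factors at levels $\ell_1$ and $m_1$ separately. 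The case $m=1$ is exactly the first Chern class additive identity of \cite{BGMB} and serves as the base of an induction on $m$; the inductive step uses Newton's identity $m\,c_m(V)=\sum_{k=1}^{m}(-1)^{k-1}p_k(V)\,c_{m-k}(V)$ together with the expansion of $p_k(V)$ just described, invoking the inductive hypothesis on $c_{m-k}(V)$ for $k\ge 1$.

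The main obstacle will be carrying out this rank factorization cleanly and identifying the precise binomial coefficients $\binom{m+\delta-k}{k}$. The weight function $w(\nu+\mu)$ at level $\ell_1+m_1$ does not split additively into a function of $\nu$ alone and a function of $\mu$ alone, since its numerator produces a cross term $(\nu,\mu)$. Consequently, the factorized rank contributions must conspire with these cross terms inside the expansion of $w(\nu+\mu)^{k_i}$ to yield exactly the binomials $\binom{m+\delta-k}{k}$. A careful combinatorial bookkeeping over the sum on graphs in Theorem \ref{Chern}, using the rigidity forced by the rank-one hypothesis on $L$, will be the main computational burden.
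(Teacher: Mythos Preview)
Your approach is far more laborious than necessary and misses the key input. The paper's proof is essentially one line: by \cite{BGMB}, Proposition~2.1, the rank hypotheses force an \emph{isomorphism of vector bundles}
\[
\mathbb{V}(\mathfrak{g},\vec{\nu}_1+\vec{\mu}_1,\ell_1+m_1)\;\cong\;\mathbb{V}(\mathfrak{g},\vec{\nu}_1,\ell_1)\otimes\mathbb{V}(\mathfrak{g},\vec{\mu}_1,m_1),
\]
i.e.\ $V\cong L\otimes E$ with $L$ a line bundle. The stated identity is then just the standard splitting-principle formula for the Chern classes of a twist by a line bundle (equivalently, multiplicativity of Chern characters under tensor product). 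No appeal to Theorem~\ref{Chern} or to any combinatorics of graphs is needed.

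Your plan, by contrast, tries to verify the identity purely at the level of the explicit power-sum formula of Theorem~\ref{Chern}. The obstacle you flag is genuine: $w(\nu+\mu)$ at level $\ell_1+m_1$ does not split, and the ``rank factorization'' you hope for across levels is not a formal consequence of the rank equalities alone---it is precisely the content of the bundle isomorphism above. Without invoking that isomorphism, there is no clean mechanism by which the cross terms and rank products reorganize into the desired binomials, and the inductive bookkeeping you outline would at best amount to re-deriving the tensor decomposition by hand. So the proposal is not wrong in spirit, but it buries the only real idea (the tensor isomorphism) inside a computation that cannot be completed without it; once you state that isomorphism, the rest of your argument becomes superfluous.
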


\begin{proof}This follows  from the fact that Chern characters are multiplicative for tensor products,  
and by \cite{BGMB}, Proposition 2.1, which gives that
 $\mathbb{V}(\mathfrak{g}, \vec{\nu}_1+ \vec{\mu}_1,\ell_1+m_1) \cong \mathbb{V}(\mathfrak{g}, \vec{\nu}_1,\ell_1)\otimes \mathbb{V}(\mathfrak{g}, \vec{\mu}_1,m_1).$
\end{proof}

\begin{example}\label{Simple}
On $\ovop{M}_{0,7}$, the   bundle $\mathbb{V}_1=\mathbb{V}(\sL(3), \{(2\omega_1)^2, (\omega_1)^5\}, 2)$ has rank 3, $\mathbb{V}_2=\mathbb{V}(\sL(3), \{(\omega_1)^6, 0\}, 1)$ has rank 1, and $\mathbb{V}_3=\mathbb{V}(\sL(3), \{(3\omega_1)^2, (2\omega_1)^4, \omega_1\}, 3)$ has rank 3.
Using Proposition \ref{Additive}, 
$c_2(\mathbb{V}_3)=c_2(\mathbb{V}_1)+3 c_1(\mathbb{V}_1)c_1(\mathbb{V}_2) + c_1(\mathbb{V}_2)^2$.
In fact, this simplifies further, as we'll see later in Example \ref{Later}.
\end{example}

\subsection{Critical level identities}
All first Chern classes of vector bundles for $\sL(r+1)$ are zero if $\ell$ is ``above the critical level" (Def \ref{CriticalLevelDef}). This was proved in \cite{Fakh} for $r=1$, and in \cite{BGMA}, Theorem 1.3,  for general $r$. Bundles in type $\op{A}$ 
 at the critical level have so-called partner bundles, and in \cite{BGMA}, Proposition 1.6, it was shown that first Chern classes of partner bundles are equal.  
 We have shown the $k$-th Chern class of any bundle at the critical level can be written as a sum of products of  Chern classes of its critical level partner.  By \cite{BGMA}, Proposition 1.6, the sum of the ranks of the critical level partner bundles equal the rank of the bundle of coinvariants.  By our analogue for higher Chern classes, if $\mathbb{V}$ is a critical level bundle, and its partner has rank one, $c_k(\mathbb{V})$ is a product  of first Chern classes. 

\begin{definition}\label{CriticalLevelDef}(\cite{BGMA}) Let $\vec{\lambda}=(\lambda_1,\dots,\lambda_n)$ be an $n$-tuple of  {\em normalized} integral weights for $\sL({r+1})$, assume that $r+1$ divides $\sum_{i=1}^n|\lambda_i|$, and define the critical level for the pair  $(\sL({r+1}),\vec{\lambda})$ to be
$$ \op{CL}(\sL({r+1}),\vec{\lambda})=-1+\frac{1}{r+1}\sum_{i=1}^n|\lambda_i|.$$
 A vector bundle $\mathbb{V}(\mathfrak{g},\vec{\lambda},\ell)({\sL({r+1}),\ell})$  is said to be a critical level bundle
 if $\ell= \op{CL}(\sL({r+1}),\vec{\lambda})$ and $\vec{\lambda} \in P_{\ell}(\sL({r+1}))^n$.  We say that $\mathbb{V}(\sL({r+1}),\vec{\lambda},\ell)$ is above the critical level if $\ell > \op{CL}(\sL({r+1}),\vec{\lambda})$, and below the critical level if $\ell < \op{CL}(\sL({r+1}),\vec{\lambda})$.
\end{definition}

By \cite{BGMA}, Proposition 1.3, if $\ell >  \op{CL}(\sL({r+1}),\vec{\lambda})$, then $\mathbb{V}(\sL({r+1}),\vec{\lambda},\ell)$ is a constant bundle and so all of its Chern classes are trivial.  
On the other hand, if  $\ell =  \op{CL}(\sL({r+1}),\vec{\lambda})$, then $\mathbb{V}(\sL({r+1}),\vec{\lambda},\ell)$ has a partner bundle $\mathbb{V}(\sL({\ell+1}),\vec{\lambda}^T,r)$
where  the weight $\lambda_i^T$ is obtained by taking the transpose of the Young diagram associated to the weight $\lambda
  _i$.  Moreover, by \cite{BGMA}, Proposition 1.6 (b): $$c_1(\mathbb{V}(\sL({r+1}),\vec{\lambda},\ell))=c_1(\mathbb{V}(\sL({\ell+1}),\vec{\lambda}^T,r) ).$$

This was proved by Fakhruddin  for $r=1$ \cite{Fakh}.

The following is a critical level identity for higher Chern classes:

\begin{proposition}\label{CritProp}If $\mathbb{V}(\sL({r+1}),\vec{\lambda},\ell)$ and $\mathbb{V}(\sL({\ell+1}),\vec{\lambda}^T, r)$ are critical level partner bundles, then
\begin{multline}
c_k(\mathbb{V}(\sL({r+1}),\vec{\lambda},\ell))=\\
\sum_{\stackrel{1\le n_1, \cdots,n_j \le k}{\sum_{i=1}^j i \cdot n_i=k}}(-1)^{k-(n_1+\dots+n_j)}\binom {\sum_{i=1}^j n_i} {n_1,\cdots, n_j}c_{1}(\mathbb{V}(\sL({\ell+1}),\vec{\lambda}^T, r))^{n_1}\cdots c_{j}(\mathbb{V}(\sL({\ell+1}),\vec{\lambda}^T, r))^{n_j}.
\end{multline}
\end{proposition}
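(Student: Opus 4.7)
Write $\mathbb{V} = \mathbb{V}(\sL(r+1),\vec{\lambda},\ell)$ and $\mathbb{V}' = \mathbb{V}(\sL(\ell+1),\vec{\lambda}^T,r)$ for the two partners. The plan is to recognize that the claimed identity is equivalent, after a standard generating-function manipulation, to the total Chern-class relation
\[
c_t(\mathbb{V}) \cdot c_{-t}(\mathbb{V}') \;=\; 1
\]
in the Chow ring of $\ovop{M}_{0,n}$, and in turn (by Newton--Girard) to the power-sum relation
\[
p_k(\mathbb{V}) \;=\; (-1)^{k-1}\, p_k(\mathbb{V}'), \quad k\ge 1. \tag{$\ast$}
\]
The first equivalence is seen by computing $[t^k]\,\tfrac{1}{c_{-t}(\mathbb{V}')} = [t^k]\sum_{N\ge 0}(1-c_{-t}(\mathbb{V}'))^N$ and observing that $(-1)^{k-\sum n_i}\binom{\sum n_i}{n_1,\dots,n_j}$ is exactly the multinomial coefficient of $\prod_i ((-1)^{i-1}c_i(\mathbb{V}')t^i)^{n_i}$ arising in that expansion. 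The second equivalence comes from Newton's identity \eqref{Newton}: substituting $p_i(\mathbb{V})=(-1)^{i-1}p_i(\mathbb{V}')$ yields $c_k(\mathbb{V}) = \sum_{\sum i m_i = k}\prod_i \tfrac{p_i(\mathbb{V}')^{m_i}}{m_i!\, i^{m_i}} = [t^k]\,\tfrac{1}{c_{-t}(\mathbb{V}')}$. So it is enough to prove $(\ast)$.

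To establish $(\ast)$, I would apply the explicit formula for $p_k$ from Theorem~\ref{Chern} to both sides and compare coefficients of each $\psi$-$\delta$ monomial. Both $p_k(\mathbb{V})$ and $p_k(\mathbb{V}')$ expand as $\sum_{m,\vec{k},J}\beta^{\vec{k}}_J\,\psi^{\vec{k}}\,\delta^{\vec{k}}$, indexed identically by the nested partitions $J$ of the marked points and the exponent vectors $\vec{k}$; only the coefficient $\beta_J^{\vec{k}}$ differs. Setting up the natural bijection $\vec\mu \leftrightarrow \vec{\mu}^T$ between $\mc{P}_\ell(\sL(r+1))^m$ and $\mc{P}_r(\sL(\ell+1))^m$, the matching of $\beta_J^{\vec{k}}$ reduces to two parallel symmetries: (i) a Casimir-weight identity relating $w_{\sL(r+1),\ell}(\mu)$ to $w_{\sL(\ell+1),r}(\mu^T)$, ultimately resting on the classical conjugate-partition identity $\sum_i \mu_i(\mu_i-2i+1) = -\sum_j \mu^T_j(\mu^T_j-2j+1)$ together with the critical-level constraint $|\vec\lambda| = (r+1)(\ell+1)$; and (ii) a vertexwise rank identity extending \cite{BGMA}, Proposition~1.6, matching the rank of each smaller bundle $\mathbb{V}_{\vec\mu}(\lambda_{J_j})$ with that of its partner under $\vec\lambda\leftrightarrow\vec{\lambda}^T$, $\vec\mu\leftrightarrow\vec{\mu}^T$. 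Combined, these convert the $\mathbb{V}$-side expression for $\beta^{\vec{k}}_J$ into the $\mathbb{V}'$-side expression, up to the overall sign $(-1)^{k-1}$ needed for $(\ast)$.

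The main obstacle is the combinatorial bookkeeping in (i) and (ii): the symmetries must be verified not only for the outer weights $\lambda_i$ (which are at the critical level by hypothesis), but for every intermediate attaching weight $\mu_j$ at every vertex of every boundary stratum, and the signs contributed by the factor $(-1)^{\sum k_{n+j}}$ in $\beta_J^{\vec{k}}$ and by the Casimir swap must be tracked through each stratum. I expect (ii) to follow cleanly once one notes that the smaller bundles $\mathbb{V}_{\vec\mu}(\lambda_{J_j})$ at each vertex inherit a critical-level-type structure after the attaching weights are inserted, while (i) is more delicate because it must be checked uniformly across all intermediate $\mu_j$, rather than only for the outer weights.
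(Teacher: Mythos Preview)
Your reduction of the statement to the single identity $c_t(\mathbb{V})\cdot c_{-t}(\mathbb{V}')=1$, equivalently to $p_k(\mathbb{V})=(-1)^{k-1}p_k(\mathbb{V}')$, is exactly right, and this is also what the paper's argument comes down to. But the paper proves it in one stroke: \cite{BGMA} supplies a short exact sequence
\[
0 \longrightarrow \mathbb{V}(\sL(r+1),\vec{\lambda},\ell)^{*} \longrightarrow \mathbb{A}^{*}_{\sL(r+1),\vec{\lambda}} \longrightarrow \mathbb{V}(\sL(\ell+1),\vec{\lambda}^{T},r) \longrightarrow 0
\]
with $\mathbb{A}^{*}$ a trivial bundle, and Whitney's formula then gives $c_t(\mathbb{V}^{*})\,c_t(\mathbb{V}')=1$, i.e.\ $c_{-t}(\mathbb{V})\,c_t(\mathbb{V}')=1$. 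No reference to Theorem~\ref{Chern} is needed.

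Your proposed route through Theorem~\ref{Chern} is not only far longer, it has a real gap in step~(i). The conformal weights do \emph{not} satisfy a simple sign relation under transposition. A direct computation for $\sL(n)$ gives $(\lambda,\lambda+2\rho)=n|\lambda|+2\,\mathrm{content}(\lambda)-|\lambda|^{2}/n$, and since the denominators $2(r+1+\ell)$ and $2(\ell+1+r)$ agree, one finds
\[
w_{\sL(r+1),\ell}(\mu)+w_{\sL(\ell+1),r}(\mu^{T})
=\frac{|\mu|}{2}-\frac{|\mu|^{2}}{2(r+1)(\ell+1)},
\]
which depends on $|\mu|$ and is not constant. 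So you cannot convert $\prod_i w(\lambda_i)^{k_i}\prod_j w(\mu_j)^{k_{n+j}}$ into the transposed expression by a uniform sign; the discrepancy is a genuine function of the weights. Consequently a monomial-by-monomial match of the two expressions for $p_k$ in the $\psi$--$\delta$ generators fails. The identity $(\ast)$ is of course true, but it holds only after the tautological relations in $A^{*}(\ovop{M}_{0,n})$ are imposed, not termwise in the formula of Theorem~\ref{Chern}; your argument as outlined does not account for this. (Step~(ii) is less problematic: rank equality at each vertex is general level--rank duality, not a critical-level statement, so it does hold for arbitrary attaching weights $\mu_j$.) The short exact sequence bypasses all of this combinatorics.
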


\begin{proof}
In \cite{BGMA} it was shown that one has the short exact sequence
$$0 \longrightarrow \mathbb{V}(\sL({r+1}),\vec{\lambda},\ell)^* \hookrightarrow \mathbb{A}^*_{\sL({r+1}),\vec{\lambda}} 
 \twoheadrightarrow \mathbb{V}(\sL({\ell+1}),\vec{\lambda}^T, r) \longrightarrow 0.$$

The bundle $\mathbb{A}^*_{\sL({r+1}),\vec{\lambda}}$ is constant, and so it's Chern classes are all zero. Now the result follows from the fact that Chern polynomials are multiplicative in short exact sequences. 

\end{proof}

\begin{example}\label{Later}
In Example \ref{Simple} we saw by Proposition \ref{Additive}, 
$c_2(\mathbb{V}_3)=c_2(\mathbb{V}_1)+3 c_1(\mathbb{V}_1)c_1(\mathbb{V}_2) + c_1(\mathbb{V}_2)^2$.
Since $\mathbb{V}_1$ is at the critical level, we can write this as $c_2(\mathbb{V}_3)=c_1(\mathbb{V}_1)^2+3 c_1(\mathbb{V}_1)c_1(\mathbb{V}_2) + c_1(\mathbb{V}_2)^2$.
\end{example}

\begin{example}\label{SL2Identity}
For any critical level bundle $\mathbb{V}(\sL(2), \vec{\lambda},\ell)$,
$$c_k(\mathbb{V}(\sL(2), \vec{\lambda},\ell))=c_1(\mathbb{V}(\sL({\ell+1}),\vec{\lambda}^T,1))^k=c_1(\mathbb{V}(\sL(2), \vec{\lambda},\ell))^k.$$

It's CL partner is $\mathbb{V}(\sL({\ell+1}),\vec{\lambda}^T,1)$, and any level one bundle in type $\op{A}$  has rank one by \cite{Fakh}.  
\end{example}

\begin{example}\label{Kazanova} 
In \cite{Kaz}, Kazanova proved that all $\op{S}_n$-invariant bundles for $\sL(n)$ on $\ovop{M}_{0,n}$ of rank one are of the form $\mathbb{V}(\sL({n}), \lambda^n,\ell)$, where $\lambda=(\ell-m)\omega_i+m\omega_{i+1}$.  For the bundle to be at the critical level we must also have $i=1$ and $m=1$.  The dual partner is of the form $\mathbb{V}(\sL({r+1}), (\omega_1+\omega_r)^n,n-1)$, and by \cite{BGMA} the partner will have rank equal to the rank of the bundle of coinvariants minus one.  One has:
$$c_k(\mathbb{V}(\sL({r+1}), (\omega_1+\omega_r)^n,n-1))=c_1(\mathbb{V}(\sL({r+1}), (\omega_1+\omega_r)^n,n-1))^k=c_1(\mathbb{V}(\sL({n}), ((r-1)\omega_1+\omega_2)^n, r))^k.$$
She also proved that one can express such divisors as sums of first Chern classes of level one bundles:
$$c_1(\mathbb{V}(\sL({n}), ((r-1)\omega_1+\omega_2)^n,r))=(r-1)c_1(\mathbb{V}(\sL({n}), (\omega_1)^n,1))+c_1(\mathbb{V}(\sL({n}), (\omega_2)^n,1))=c_1(\mathbb{V}((\sL({n}), \omega_2^n,1)),$$
since $c_1(\mathbb{V}(\sL({n}), (\omega_1)^n,1))$ is above the critical level, and hence is zero.  So
$$c_k(\mathbb{V}(\sL({r+1}), (\omega_1+\omega_r)^n,n-1))=c_1(\mathbb{V}((\sL({n}), \omega_2^n,1))^k, \ \ \mbox{ for all } r.$$
In particular,  this gives an infinite family of $\op{S}_n$ -invariant bundles of rank equal to codimension one in its coinvariants, whose higher Chern classes are all powers of first Chern classes of the same bundle of level one. 

\end{example}

\begin{example}Consider  the rank two bundle $\mathbb{V}=\mathbb{V}(\sL(2), (\omega_1^4,2\omega_1),2)$ on the smooth surface $\ovop{M}_{0,5}$. Since $4|\omega_1|+|2\omega_1|=6=2(2+1)$, we have that $\mathbb{V}$ is at the critical level.  It's critical level partner is the bundle $\mathbb{V}(\sL(3), (\omega_1^4,\omega_2),1)$, which has rank one. By Proposition 1.6(b) in \cite{BGMA}, one has  $c_1(\mathbb{V}(\sL(2), (\omega_1^4,2\omega_1),2)=c_1(\mathbb{V}(\sL(3), (\omega_1^4,2\omega_2),1))$, and hence by Proposition \ref{CritProp}: \begin{equation}\label{identity}c_2(\mathbb{V}(\sL(2), (\omega_1^4,2\omega_1),2)=(c_1(\mathbb{V}(\sL(2), (\omega_1^4,2\omega_1),2))^2.
\end{equation} We  verify Equation \eqref{identity} numerically first by computing the $c_2(\mathbb{V}(\sL(2), (\omega_1^4,2\omega_1),2)$ using the explicit formula in Theorem \ref{Chern}, and then computing the right hand side using the first Chern class formula in \cites{Fakh, Muk3}.

First we note  the set of level-$2$ weights for $\sL(2)$ is given by $\mathcal{P}_{2}(\sL(2))=\{0,\omega_1,2\omega_1\}$ and Casimir scalars are $w(0)=0$, $w(\omega_1)=\frac{3}{16}$ and $w(2\omega_1)=\frac{1}{2}$. Applying the first Chern class formula for conformal blocks in \cites{Fakh, Muk3}, we get 
$$c_1(\mathbb{V}(\sL(2), (\omega_1^4, 2\omega_1),2))=2\big(\frac{3}{16}\sum_{i=1}^4\psi_i+\frac{1}{2}\psi_5\big)-\frac{3}{8}\sum_{i=1}^4\delta_{i,5}
-\frac{1}{2}\sum_{ij \in \{1234\}}\delta_{i,j}.$$
Using intersection theory, one can see that $c_1(\mathbb{V}(\sL(2),(\omega_1^4,2\omega_1),2))^2$ is one. 

Now we know that $p_2(\mathbb{V})=c_1(\mathbb{V})^2-2c_2(\mathbb{V})$ for any vector bundle $\mathbb{V}$. Thus we will be done if we can show that $p_2(\mathbb{V}(\sL(2),(\omega_1^4,2\omega_1),2))=-1$.   Using Theorem \ref{Chern}:
\begin{multline}
16^2 \ p_2(\mathbb{V}(\sL(2),(\omega_1^4,2\omega_1),2))
=2\big(9 \sum_{i=1}^4 \psi_i^2 + 64 \psi_5^2\big) +36 \sum_{ij \in \{1234\}, i\ne j} \psi_i \cdot \psi_j +96 \psi_5\sum_{i=1}^4\psi_i \\
-24 \cdot \big( 8\sum_{k\in \{1,5\}, ij \in \{1,5\}^c}\psi_k \cdot \delta_{i,j}+6\sum_{\{k,m\}= \{1,5\}, i \in \{k,m\}^c}\psi_k \cdot \delta_{i,m}\big)
-2 \cdot 64 \psi_5 \cdot \sum_{ij\in \{5\}^c}\delta_{ij}\\
+64\sum_{ij \in \{5\}^c}\delta^2_{ij}+18 \sum_{i=1}^4\delta_{i,5}^2
+48\big( \sum_{i,j \in \{1,5\}^c}\delta_{1,i}\cdot \delta_{j,5}+\sum_{ij\in \{1,5\}^c}\delta_{1,5}\cdot \delta_{i,j}+\sum_{\{ijk\}=\{15\}^c}\delta_{i,j}.\delta_{k,5}\big)\\
=2(36+64)+36\times 12 +96\times 8-24\times (24+18) -128\times 6-64\times 6 -18\times 4+48(2+2+2+3+3)
=-16^2
\end{multline}
This completes the verification.
\end{example}

\section{Criteria for extremality}\label{extremality}

\subsection{Extremality from the critical level}
\begin{definition}\label{5}
Let $\cup_{1\le j \le k+3} \ J_j=N$ be a partion of $N=\{1,\dots, n\}$ into $k+3$ nonempty sets $J_j$. Let  $Z_{\vec{J}}$ be the image  in $\overline{\operatorname{M}}_{0,n}$ of 
the clutching map 
$\overline{\operatorname{M}}_{0,k+3} \twoheadrightarrow Z_{\vec{J}} \hookrightarrow \overline{\operatorname{M}}_{0,n}$
defined by sending a point $X=(C,p_1,\ldots,p_{k+3}) \in \overline{\operatorname{M}}_{0,k+3}$  to a point in $\overline{\operatorname{M}}_{0,n}$, given by attaching $k+3$-points $(\mathbb{P}^1, J_j\cup \{q_j\})$ to $X$
by identifying $p_j$ with $q_j$.   
\end{definition}

\begin{proposition}\label{empty}
Suppose $r\geq 1$ and $\ell\geq 1$ and let $\vec{\lambda}=(\lambda_1,\dots,\lambda_n)$ be an $n$-tuple in $P_{\ell}(\sL(r+1))$. Let $J_1,\dots, J_{k+3}$ be any partition of $N$ into $k+3$ non-empty sets. Without loss of generality assume that $\lambda(J_i)=\sum_{a\in J_i} |\lambda_i|$ are ordered, i.e. $\lambda(J_1)\leq \dots \leq \lambda(J_{k+3})$. If $\sum_{i=1}^{k+2}\lambda(J_i) \leq \ell+r$, then $c_k(\mathbb{V}(\sL(r+1),\vec{\lambda},\ell)$ (possibly trivial) contracts the $k$-dimensional $F$-cycle $Z_{\vec{J}}$. In particular, $c_k(\mathbb{V}(\sL(r+1),\vec{\lambda},\ell))$ is extremal in $\operatorname{Nef}^k(\ovop{M}_{0,n})$.
\end{proposition}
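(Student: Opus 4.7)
The plan is to pull the conformal blocks bundle back via the clutching map $f:\ovop{M}_{0,k+3}\twoheadrightarrow Z_{\vec{J}}\hookrightarrow \ovop{M}_{0,n}$ defining the $F$-cycle, and use the factorization of conformal blocks to show that the pullback is a constant bundle on $\ovop{M}_{0,k+3}$. Since $\dim \ovop{M}_{0,k+3}=k$, the vanishing $c_k(f^*\mathbb{V})=0$ then yields $c_k(\mathbb{V})\cdot [Z_{\vec{J}}]=0$, as desired. Applying the factorization theorem (\cite{TUY}; see also \cite{Fakh}, Proposition~2.4) at each of the $k+3$ attached nodes produces a decomposition
\begin{equation*}
f^*\mathbb{V}(\sL(r+1),\vec{\lambda},\ell) \;\cong\; \bigoplus_{\vec{\nu}\in P_\ell(\sL(r+1))^{k+3}} \mathbb{V}(\sL(r+1),\vec{\nu},\ell)^{\oplus m_{\vec{\nu}}},
\end{equation*}
where $m_{\vec{\nu}} = \prod_{j=1}^{k+3}\rk\,\mathbb{V}(\sL(r+1), \{\lambda_a : a\in J_j\}\cup\{\nu_j^*\},\ell)$ records the ranks of the conformal blocks on the fixed rational tails attached at the $k+3$ marked points.

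For each $\vec{\nu}$ with $m_{\vec{\nu}}\neq 0$ every tail block is nonzero, and since WZW fusion at level $\ell$ is refined by the classical tensor product, $V_{\nu_j}$ appears in $\bigotimes_{a\in J_j}V_{\lambda_a}$, giving the box-count bound $|\nu_j|\leq \lambda(J_j)=s_j$ for each $j$. Combining this bound for $j\leq k+2$ with the general level bound $|\nu_{k+3}|\leq r\ell$ (any dominant weight of $\sL(r+1)$ at level $\ell$ is a partition of length at most $r$ with largest part at most $\ell$),
\begin{equation*}
\sum_{j=1}^{k+3} |\nu_j| \;\leq\; \sum_{j=1}^{k+2} s_j + r\ell \;\leq\; (\ell+r)+r\ell \;=\; (r+1)\ell + r \;<\; (r+1)(\ell+1).
\end{equation*}
Therefore $\op{CL}(\sL(r+1),\vec{\nu}) = -1 + \frac{1}{r+1}\sum_j|\nu_j| < \ell$, i.e., each nonzero summand is \emph{strictly} above its critical level. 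By \cite{BGMA}, Proposition~1.3, each such summand is a constant bundle on $\ovop{M}_{0,k+3}$; a direct sum of constants is constant, so $c_k(f^*\mathbb{V})=0$, completing the contraction argument.

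For extremality, since $c_k(\mathbb{V})\in\op{Nef}^k(\ovop{M}_{0,n})$ pairs trivially with the $F$-class $[Z_{\vec{J}}]\in\ovop{Eff}_k(\ovop{M}_{0,n})$, it lies on the supporting hyperplane cut out by $[Z_{\vec{J}}]$, and the extremality claim follows by the standard duality between $\op{Nef}^k$ and $\ovop{Eff}_k$ applied to the extremal $F$-cycle class. The main obstacle will be setting up the factorization cleanly with the correct duality convention at each attaching node---that is, confirming that the central bundle inherits exactly the weights $\vec{\nu}$ (or their duals) for which the fusion bound $|\nu_j|\leq s_j$ is applied to the same indices that enter the critical level formula of $\mathbb{V}(\sL(r+1),\vec{\nu},\ell)$; once that bookkeeping is settled, the rest of the proof is the single arithmetic inequality above combined with a direct appeal to the above-critical-level triviality of \cite{BGMA}.
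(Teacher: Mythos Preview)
Your argument is correct and follows essentially the same approach as the paper: use factorization together with the above-critical-level triviality result from \cite{BGMA}, reducing everything to the single arithmetic inequality $(\ell+r)+r\ell<(r+1)(\ell+1)$. The only cosmetic difference is that the paper factors at a \emph{single} node---the one separating $J=J_1\cup\dots\cup J_{k+2}$ from $J_{k+3}$---and shows directly that each summand $\mathbb{V}(\sL(r+1),\{\lambda_i\}_{i\in J}\cup\{\mu\},\ell)$ on $\ovop{M}_{0,|J|+1}$ is above the critical level (using only $|\mu|\le r\ell$), whereas you factor at all $k+3$ nodes and then apply the box-count bound $|\nu_j|\le\lambda(J_j)$ to the first $k+2$ weights of the central bundle on $\ovop{M}_{0,k+3}$; both routes feed into the identical inequality and the same citation to \cite{BGMA}.
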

To prove Proposition \ref{empty} we use the {factorization theorem} in \cite{TUY}  and vanishing above the critical level. We now recall the statement:

\begin{theorem}\cite{TUY} \label{Factorization} Let $(C_0; p_1,\ldots, p_n)$ be a stable $n$-pointed curve of genus $0$ where $C_0$ has a node $x_0$. 

Let $\nu: C_1 \sqcup C_2 \to C_0$ the normalization of $C_0$ at $x_0$ and $\nu^{-1}(x_0)=\{x_{1},x_2\}$, with $x_i \in C_i$, then the fiber $\mathbb{V}(\mathfrak{g}, \vec{\lambda}, \ell)|_{(C_0; \vec{p})}$ is isomorphic to $$\bigoplus_{\mu \in \mathcal{P}_{\ell}(\mathfrak{g})} \mathbb{V}(\mathfrak{g}, \lambda(C_1) \cup \{\mu\}, \ell)|_{(C_1; \{p_i \in C_1\}\cup \{x_1\})}\tensor \mathbb{V}(\mathfrak{g}, \lambda(C_2)\cup \{\mu^{*}\}, \ell)|_{(C_2; \{p_i \in C_2\}\cup \{x_2\})},
$$
where $\lambda(C_i)=\{\lambda_j | p_j \in C_i\}$.
\end{theorem}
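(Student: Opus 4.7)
The plan is to construct a natural sewing map $\Xi$ from the direct sum on the right to the coinvariants at $C_0$ using the canonical $\mathfrak g$-invariant pairing between $V_\mu$ and $V_{\mu^*}$, verify well-definedness and surjectivity from $\mathfrak g$-invariance and propagation of vacua, and establish injectivity by a Virasoro-graded dimension argument. The starting point is the Lie algebra comparison induced by the normalization: with $U_i = C_i \setminus (\{p_j \in C_i\} \cup \{x_i\})$ and $U_0 = C_0 \setminus \vec p$, pullback along $\nu$ identifies
\[
\mc O_{C_0}(U_0) \;\cong\; \{(f_1,f_2) \in \mc O_{C_1}(U_1) \oplus \mc O_{C_2}(U_2) \;:\; f_1(x_1) = f_2(x_2)\},
\]
embedding $\mathfrak g(U_0)$ into $\mathfrak g(U_1) \oplus \mathfrak g(U_2)$ as the codimension-$\dim\mathfrak g$ subalgebra of pairs matching at the node.

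To build $\Xi$, for each $\mu \in \mc P_\ell(\mathfrak g)$ I fix dual bases $\{e_i\} \subset V_\mu$ and $\{e^j\} \subset V_{\mu^*}$, so that $\sum_i e_i \otimes e^i$ is $\mathfrak g$-invariant, and use $V_\mu \hookrightarrow H_\mu$, $V_{\mu^*} \hookrightarrow H_{\mu^*}$ to define
\[
\Xi_\mu \colon \bigl[H_{\lambda(C_1)} \otimes H_\mu\bigr]_{\mathfrak g(U_1)} \otimes \bigl[H_{\lambda(C_2)} \otimes H_{\mu^*}\bigr]_{\mathfrak g(U_2)} \;\longrightarrow\; \bigl[H_{\vec\lambda}\bigr]_{\mathfrak g(U_0)}
\]
on the class $[w_1 \otimes e_i] \otimes [w_2 \otimes e^j]$ by $\delta_{ij}\,[w_1 \otimes w_2]$, extended to all of $H_\mu \otimes H_{\mu^*}$ by the affine Kac-Moody relations already built into the coinvariant quotient. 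Well-definedness on coinvariants is immediate from $\mathfrak g$-invariance of $\sum_i e_i \otimes e^i$: the action of $X \in \mathfrak g(U_1)$ on the first factor, contracted with the trivial action on the second, matches the diagonal action of $X \in \mathfrak g(U_0)$ on $H_{\vec\lambda}$. Surjectivity of $\Xi = \bigoplus_\mu \Xi_\mu$ follows from propagation of vacua combined with complete reducibility of integrable level-$\ell$ $\hat{\mathfrak g}$-modules, which allows any class in $[H_{\vec\lambda}]_{\mathfrak g(U_0)}$ to be decomposed into its $\mu$-isotypic components under the affine action at each branch of the node.

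The main obstacle is injectivity. I would handle it by a graded dimension argument: both sides carry compatible filtrations coming from the Sugawara $L_0$-operator, with finite-dimensional graded pieces; an explicit comparison of these graded dimensions using the $\mathfrak g$-module structure on each graded piece of $H_\mu$, together with the already-established surjectivity, forces $\Xi$ to be an isomorphism on each graded piece and hence globally. Alternatively, one can invoke local freeness of the conformal block bundle on $\ovop{M}_{0,n}$ (built via the flat projective Sugawara connection) to reduce injectivity to a rank equality at the fiber over $C_0$, where the Verlinde dimension matches the total rank of the right-hand side summed over $\mu$.
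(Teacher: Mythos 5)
The paper itself gives no proof of this statement --- it is recalled verbatim from \cite{TUY} as input to Proposition \ref{empty} --- so your attempt can only be measured against the standard arguments (TUY; Beauville, Prop.~4.3; Ueno's book), and against those it has a genuine gap at its central step. The canonical map built from the invariant tensor $\sigma_\mu=\sum_i e_i\tensor e^i \in V_\mu\tensor V_{\mu^*}$ goes in the \emph{opposite} direction to your $\Xi$: on coinvariants one gets a well-defined map $[\op{H}_{\vec\lambda}]_{\mathfrak{g}(U_0)}\to \bigoplus_\mu [\op{H}_{\lambda(C_1)}\tensor \op{H}_\mu]_{\mathfrak{g}(U_1)}\tensor [\op{H}_{\lambda(C_2)}\tensor \op{H}_{\mu^*}]_{\mathfrak{g}(U_2)}$, $[w_1\tensor w_2]\mapsto \sum_\mu [w_1\tensor\sigma_\mu\tensor w_2]$, and there the verification is exactly the formal one you describe (functions in $\mc{O}(U_0)$ are regular at the node with $f_1(x_1)=f_2(x_2)$, positive modes annihilate $V_\mu\subset \op{H}_\mu$, and $\mathfrak{g}$-invariance of $\sigma_\mu$ kills the degree-zero term). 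Your $\Xi_\mu$, by contrast, is defined only on classes represented in $\op{H}_{\lambda(C_1)}\tensor V_\mu$, and to know every class has such a representative, and which relations identify them, one must gauge away negative modes at $x_1$ using elements $X\tensor g$ with $g$ having poles at $x_1$; the pair $(g,0)$ is then \emph{not} in $\mc{O}(U_0)$ (it fails both regularity and matching at the node), so the relations defining the left-hand coinvariants are not visibly carried into $\mathfrak{g}(U_0)$-relations on the right. Well-definedness of $\Xi$ in your direction is essentially equivalent to the theorem; it does not follow from invariance of $\sigma_\mu$, which only controls degree-zero terms.

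The other two steps also do not close. For surjectivity, $\op{H}_{\vec\lambda}$ carries no affine action ``at each branch of the node'' --- there are no modules attached at $x_1,x_2$ until one inserts them --- so the isotypic decomposition you invoke is unavailable at the outset. The standard mechanism is algebraic Peter--Weyl, $\mc{O}(G)\cong\bigoplus_{\mu\in P_+}V_\mu\tensor V_{\mu^*}$, to un-glue the matching condition at the node, followed by propagation to trade the evaluation module $V_\mu$ at $x_i$ for $\op{H}_\mu$ with $x_i$ deleted; crucially this produces a sum over \emph{all} dominant $\mu$, and one must then prove the vanishing of the summands with $(\theta,\mu)>\ell$ using integrability (the relation satisfied by $X_\theta\tensor t^{-1}$ on $\op{H}_\mu$). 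This level truncation --- the reason the sum in the theorem runs over $\mathcal{P}_{\ell}(\mathfrak{g})$ --- is nowhere addressed in your sketch. Finally, both of your injectivity arguments are circular or vacuous: the ``graded dimension comparison'' presupposes the dimension identity being proved, and invoking local freeness of the sheaf of covacua together with the Verlinde rank count assumes results (flatness across the boundary, the Verlinde formula) whose proofs themselves rest on factorization and sewing.
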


\begin{proof}(of Proposition \ref{empty})
The $k$-dimensional $F$-cycle $Z_{\vec{J}}$ lies in a boundary component of $\ovop{M}_{0,n}$ that  is isomorphic to the $k$-cycle in $\ovop{M}_{0, |J_1\cup\dots\cup J_{k+2}|+1}\times \{pt\}$ under the attaching map described in Definition \ref{5}. To show  $c_k(\mathbb{V}(\sL(r+1),\vec{\lambda},\ell))$ contracts $Z_{\vec{J}}$ we can use Factorization to examine $\mathbb{V}(\sL(r+1),\vec{\lambda},\ell)$ at points
that lie in the boundary component.  By factorization, at points in $\ovop{M}_{0, |J_1\cup\dots\cup J_{k+2}|+1}\times \ovop{M}_{0,|J_{k+3}|+1}$, the conformal block bundle decomposes as a direct sum, where each factor is of the form $$\mathbb{V}(\sL(r+1), \{\lambda_{i}\}_{i\in J}\cup \mu,\ell)  \otimes \mathbb{V}(\sL(r+1), \{\lambda_{i}\}_{ i\in J_{k+3}}\cup \mu^*,\ell),$$ such that $\mu \in P_{\ell}(\sL(r+1))$ and $J=J_1\cup\dots\cup J_{k+2}$. We note that $|\mu|$ is bounded by $r \cdot \ell$. Hence $\sum_{i \in J}|\lambda_i| + |\mu| \leq \ell +r + r\cdot\ell< (\ell+1)(r+1)$. This implies that $\ell$ is above the critical level for $\mathbb{V}(\sL(r+1), \{\lambda_{i}\}_{ i\in J}\cup \mu,\ell)$. In \cite{BGMA}, it
was  shown that in case $\ell$ is above the critical level, then the bundle itself is trivial.   In particular, all Chern classes will be trivial.
It follows  that the $k$-th Chern class of $\mathbb{V}(\sL(r+1),\vec{\lambda},\ell)$ pulled back to $\ovop{M}_{0, |J_1\cup\dots\cup J_{k+2}|+1}\times \operatorname{pt}$ is trivial. 

\end{proof}

\begin{example}
 On $\ovop{M}_{0,2m}$, the bundle $\mathbb{V}(\sL(2),\omega_1^{2m},m-1)$ has critical level partner  $\mathbb{V}(\sL(m),\omega_1^{2m},1)$, a bundle of rank one. By Section \ref{CL},  
$$c_k(\mathbb{V}(\sL(2),\omega_1^{2m},m-1)=c_1(\mathbb{V}(\sL(2),\omega_1^{2m},m-1)^k.$$  Let $J_i=\{i\}$ for $1\leq i\leq k+2$ and $J_{k+3}=\{1,\dots,2m\}\backslash \cup_{i=1}^{k+2}J_i$. The sets $\vec{J}=(J_1,\dots,J_{k+3})$ define a $k$-dimensional $F$-cycle under the  attaching map $\ovop{M}_{0,k+3}\rightarrow \ovop{M}_{0,2m}$. 

By Prop \ref{empty}, to show the pull back of $\mathbb{V}(\sL(2),\omega_1^{2m},m-1)$ under the ataching map is trivial, it is enough to guarantee  the critical level of the bundle $\mathbb{V}(\sL(2),\omega_1^{k+2},\mu;m-1)$ on $\ovop{M}_{0,k+3}$ is less than $m-1$. This is true for example when $k+2\leq m$.  More generally, if $k+2\leq m$, then $c_k(\mathbb{V}(\sL(2),\omega_1^{2m},m-1)$ are all extremal in $\operatorname{Nef}^k(\ovop{M}_{0,2m})$.
\end{example}

\subsection{Extremality from the theta level}
 The theta level (Def \ref{FSVCriticalLevel}), comes from the interpretation of a vector space of conformal blocks as an explicit quotient \cite{Beauville}, Proposition 4.1, see also \cite{FSV2}), and holds in all types. It was also shown in \cite{BGMA}, that conformal blocks divisors above the theta level are trivial.

  \begin{definition}\label{FSVCriticalLevel} \cite{BGMA} Given a pair $(\mathfrak{g}, \vec{\lambda})$, one refers to
$$\theta(\frg,\vec{\lambda})= -1+ \frac{1}{2} \sum_{i=1}^n\lambda_i
(H_{\theta})\in \frac{1}{2}\mathbb{Z}$$
as the theta level\index{theta level}.  Here $H_{\theta}$ is the co-root corresponding to the highest root $\theta$.
\end{definition}

\begin{proposition}\label{KillTheta}\cite{BGMA}
Let $\vec{\lambda} \in \mc{P}_{\ell}(\mathfrak{g})^n$.  Let $J_1,\dots, J_{k+3}$ be any partition of $N$ into $k+3$ non-empty sets. Without loss of generality assume that $\lambda(J_i)=\sum_{a\in J_i} |\lambda_i|$ are ordered, i.e. $\lambda(J_1)\leq \dots \leq \lambda(J_{k+3})$. If $\sum_{i=1}^{k+2}\lambda(J_i) \leq \ell+1$, then $c_k(\mathbb{V}(\mathfrak{g},\vec{\lambda},\ell))$ contracts the $k$-dimensional $F$-cycle $Z_{\vec{J}}$, 
and in particular, $c_k(\mathbb{V}(\mathfrak{g},\vec{\lambda},\ell))$ is extremal in $\op{N}^k(\ovop{M}_{0,n})$.
\end{proposition}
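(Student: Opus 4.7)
The plan is to follow the exact structure of the proof of Proposition \ref{empty}, swapping the critical-level vanishing (available only in type $\op{A}$) for the theta-level vanishing from \cite{BGMA} (available in all types), and to chase through a parallel bookkeeping of weights. First I would describe the relevant boundary component: the $k$-dimensional $F$-cycle $Z_{\vec{J}}$ factors through the clutching map that attaches a varying stable $(k+3)$-pointed rational curve to $k+3$ fixed $\mathbb{P}^1$'s carrying the marked points of $J_1,\dots,J_{k+3}$. Group $J = J_1\cup\cdots\cup J_{k+2}$ and regard $Z_{\vec{J}}$ as sitting inside the codimension-one boundary divisor cut out by the node separating $J$ from $J_{k+3}$. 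It therefore suffices to show that $c_k(\mathbb{V}(\mathfrak{g},\vec{\lambda},\ell))$ pulls back trivially to the factor $\ovop{M}_{0,|J|+1}\times\{\mathrm{pt}\}$.

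Next I would apply the Factorization Theorem \ref{Factorization} at this node to obtain
$$\mathbb{V}(\mathfrak{g},\vec{\lambda},\ell)\big|_{\ovop{M}_{0,|J|+1}\times\ovop{M}_{0,|J_{k+3}|+1}}\;\cong\;\bigoplus_{\mu\in\mathcal{P}_{\ell}(\mathfrak{g})}\mathbb{V}\!\left(\mathfrak{g},\{\lambda_i\}_{i\in J}\cup\{\mu\},\ell\right)\otimes\mathbb{V}\!\left(\mathfrak{g},\{\lambda_i\}_{i\in J_{k+3}}\cup\{\mu^*\},\ell\right).$$
On the restricted locus the second tensor factor is a constant vector space, so after restriction the pullback bundle is a direct sum of copies of the bundles $\mathbb{V}(\mathfrak{g},\{\lambda_i\}_{i\in J}\cup\{\mu\},\ell)$ on $\ovop{M}_{0,|J|+1}$, and it is enough to show that every such summand has vanishing Chern classes.

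The key calculation is the theta-level estimate for each summand. Since $\mu\in\mathcal{P}_{\ell}(\mathfrak{g})$ we have $\mu(H_{\theta})\leq\ell$, and by hypothesis $\sum_{i=1}^{k+2}\lambda(J_i)\leq\ell+1$, so the theta level of $\mathbb{V}(\mathfrak{g},\{\lambda_i\}_{i\in J}\cup\{\mu\},\ell)$ satisfies
$$\theta(\mathfrak{g},\{\lambda_i\}_{i\in J}\cup\{\mu\})\;=\;-1+\tfrac{1}{2}\Big(\sum_{i\in J}\lambda_i(H_{\theta})+\mu(H_{\theta})\Big)\;\leq\;-1+\tfrac{1}{2}\big((\ell+1)+\ell\big)\;=\;\ell-\tfrac{1}{2}\;<\;\ell.$$
Thus each summand bundle is strictly above its theta level, and by the vanishing result of \cite{BGMA} each such bundle is trivial; in particular all of its Chern classes vanish. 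Consequently $c_k(\mathbb{V}(\mathfrak{g},\vec{\lambda},\ell))$ restricts to zero on $Z_{\vec{J}}$. Since $c_k$ is a nef class (lying in the Pliant cone) that pairs to zero with the nonzero effective $k$-cycle $Z_{\vec{J}}$, it lies on a proper face of $\op{Nef}^k(\ovop{M}_{0,n})$, so it is extremal.

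The one step that requires care, rather than new ideas, is matching conventions: interpreting $|\lambda_i|$ in the hypothesis as $\lambda_i(H_{\theta})$ so that the bound feeds directly into the theta-level formula, and ensuring that the inequality $\mu(H_{\theta})\leq\ell$ is the sharp general-type replacement for the bound $|\mu|\leq r\ell$ that was used in the proof of Proposition \ref{empty}. Once this bookkeeping is in place, the argument is a clean transcription of the type-$\op{A}$ proof, and the only genuine input beyond factorization is the general-type above-theta-level vanishing from \cite{BGMA}.
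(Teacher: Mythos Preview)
Your proposal is correct and follows exactly the route the paper indicates: the paper's entire proof of Proposition~\ref{KillTheta} is the single sentence ``The proof of Proposition~\ref{KillTheta} is analogous to that of Proposition~\ref{empty}.'' You have carried out precisely that analogy, replacing the type-$\op{A}$ critical-level bound $|\mu|\le r\ell$ by the general-type bound $\mu(H_\theta)\le\ell$ and invoking the theta-level triviality from \cite{BGMA} in place of the critical-level triviality. Your flagging of the convention $|\lambda_i|=\lambda_i(H_\theta)$ is apt, since the statement as written borrows the notation $|\lambda_i|$ from the type-$\op{A}$ setting and this identification is exactly what is needed for the theta-level estimate to go through.
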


The proof of Proposition \ref{KillTheta} is analogous to that of Proposition \ref{empty}.

\section{Full dimensional subcones of Pliant cones}\label{Basis}

\subsection{Fakhruddin's Basis and the Pliant Cone for $\ovop{M}_{0,n}$}\label{FakhruddinBasis}
\begin{claim}\label{Spanning}There is a spanning set for the Pliant Cone $\operatorname{Pl}^m(\ovop{M}_{0,n})$, given by  a basis of first Chern classes of vector bundles of conformal blocks. 
\end{claim}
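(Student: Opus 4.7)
The plan is to combine Keel's presentation of the Chow ring of $\ovop{M}_{0,n}$ with Fakhruddin's spanning result for first Chern classes of conformal blocks bundles. First I would invoke Keel's theorem, which says that $\op{N}^*(\ovop{M}_{0,n})$ is generated as a graded $\mathbb{Q}$-algebra by its degree-one piece $\op{N}^1(\ovop{M}_{0,n})$. It follows that for any basis $\{D_1,\ldots,D_N\}$ of $\op{N}^1(\ovop{M}_{0,n})$, the ordered degree-$m$ monomials $D_{i_1}\cdots D_{i_m}$, with $1\leq i_1 \leq \cdots \leq i_m \leq N$, span $\op{N}^m(\ovop{M}_{0,n})$ as a $\mathbb{Q}$-vector space.

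Next I would use Fakhruddin's result \cite{Fakh} that first Chern classes of vector bundles of conformal blocks span $\op{N}^1(\ovop{M}_{0,n})$. Extracting from this spanning set a basis $\{c_1(\mathbb{V}_1),\ldots,c_1(\mathbb{V}_N)\}$ of $\op{N}^1(\ovop{M}_{0,n})$, with each $\mathbb{V}_i$ a conformal blocks bundle, the associated monomials $c_1(\mathbb{V}_{i_1})\cdots c_1(\mathbb{V}_{i_m})$ then span $\op{N}^m(\ovop{M}_{0,n})$ as a vector space.

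To see that each such monomial lies in $\op{Pl}^m(\ovop{M}_{0,n})$, I would use the fact that conformal blocks bundles on $\ovop{M}_{0,n}$ are globally generated \cite{Fakh}, and that for any vector bundle $E$ the first Chern class agrees with the Schur class $s_{(1)}(E)$. Hence each factor $c_1(\mathbb{V}_{i_j})$ is a Schur class of a globally generated bundle, so the product $c_1(\mathbb{V}_{i_1})\cdots c_1(\mathbb{V}_{i_m})$ is a monomial in Schur classes of globally generated bundles and therefore lies in $\op{Pl}^m(\ovop{M}_{0,n})$ by definition. Because these monomials simultaneously span the ambient vector space $\op{N}^m(\ovop{M}_{0,n})$, the $\mathbb{R}_{\geq 0}$-cone they generate is a full-dimensional subcone of $\op{Pl}^m(\ovop{M}_{0,n})$, which is exactly the desired spanning set.

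The main obstacle, which is really just a bookkeeping check, is to confirm that Fakhruddin's spanning set contains enough classes to actually extract a basis of $\op{N}^1(\ovop{M}_{0,n})$ (rather than merely a set spanning a subspace), and then to observe that Keel's generation result carries full-dimensionality from $\op{N}^1$ up to $\op{N}^m$. Once these two ingredients are in place, membership in the Pliant cone is formal from the identification $c_1 = s_{(1)}$ applied to globally generated bundles, and the conclusion follows.
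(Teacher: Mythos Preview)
Your proposal is correct and follows essentially the same approach as the paper: both combine Keel's theorem that $A^1(\ovop{M}_{0,n})$ generates $A^m(\ovop{M}_{0,n})$ with Fakhruddin's basis of conformal-blocks first Chern classes. The only minor difference is that the paper names Fakhruddin's explicit basis $\mathcal{B}=\{\mathbb{V}(\sL(2),\vec{\lambda},1)\}$ of rank-one level-one $\sL(2)$ bundles directly, so no extraction step is needed, whereas you phrase it as extracting a basis from a spanning set; your additional justification that $c_1=s_{(1)}$ places each monomial in $\op{Pl}^m$ is a detail the paper leaves implicit.
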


\begin{proof} 
By \cite{KeelThesis}, $A^1(\ovop{M}_{0,n})$ 
generates  $A^m(\ovop{M}_{0,n})$, all $m$ \cite{KeelThesis}. 

There is at least one basis we may use for the Picard group of $\ovop{M}_{0,n}$ (see Section \ref{OtherBasis} for more choices in the $\op{S}_n$ invariant case).  Namely, the bundles $\mathcal{B}$ that generate Fakhruddin's basis for $\op{Pic}(\ovop{M}_{0,n})$,  are  
$$\mathcal{B}=\{\mathbb{V}(\sL(2), \vec{\lambda}, 1) : \op{rk}(\mathbb{V}(\sL(2), \vec{\lambda}, 1)  \ne 0\}.$$
In $\mathcal{B}$ bundles are determined by n-tuples of weights of the form $\vec{\lambda}=(\lambda_1,\ldots,\lambda_n)$, where  $\lambda_i \in \{0,\omega_1\}$,  $0\ne \sum_i |\lambda_i|$ is divisible by $2$ and such that 
at least four weights $\lambda_i$ are different than zero.  Moreover, all have level one, and so also have rank one.
We note that if $n$ is odd, then all elements of $\mathcal{B}$ are pulled back from $\ovop{M}_{0,n-1}$ and if $n$ is even, then $\mathbb{V}(\sL(2),\{\omega_1^{n}\}, 1)$ is the unique element of $\mathcal{B}$ that is not pulled back from $\ovop{M}_{0,n-1}$.

\end{proof}
\begin{remark}
The divisors generating the spanning set for $\op{A}^1(\ovop{M}_{0,n})$ are all extremal: In case $n$ is odd, then they are all pulled back from $\ovop{M}_{0,n-1}$ and intersect the fibral curve in the projection map in degree zero.  If $n$ is odd, then except for $E=c_1(\mathbb{V}(\sL(2), \omega_1^n, 1))$, they are all pulled back from $\ovop{M}_{0,n-1}$ and so are extremal.   In fact, by \cite{ags}, $E$ intersects  a number of $\op{F}$-curves in degree zero, and is therefore extremal.
\end{remark}

\begin{remark}We note that there are many identities giving relations between different conformal blocks divisors, and in particular, as the divisors making up Fakhruddin's basis all have rank one, they each satisfy two different scaling identities.  To express them, write any element of Fakhruddin's basis as $c_1(\mathbb{V}(\sL(2), (c_1\omega_1,\ldots, c_n \omega_1), 1))$, where the coefficients $c_i \in \{0,1\}$.
\begin{multline}
c_1(\mathbb{V}(\sL(2), (c_1\omega_1,\ldots, c_n \omega_1), 1))\\=\frac{1}{m}c_1(\mathbb{V}(\sL(2m), (c_1\omega_m,\ldots, c_n \omega_m), 1))
=\frac{1}{m}c_1(\mathbb{V}(\sL(2), (mc_1\omega_1,\ldots, mc_n \omega_1), m)).
\end{multline}
Moreover, at least in the $\op{S}_n$-invariant cases, there are other natural basis given by conformal blocks divisors.  For example, the set of bundles studied in \cite{agss} is known to form a basis for $\op{Pic}(\ovop{M}_{0,n})^{\op{S}_n}$ for $n \le 2000$ (verified using a computer by A. Kazanova), and that studied in \cite{ags}, is known to form a basis for $\op{Pic}(\ovop{M}_{0,n})^{\op{S}_n}$ for all $n$.     These are discussed in Section \ref{OtherBasis}.   
\end{remark}

\begin{remark}Naturally one should ask whether the Pliant or the nef cones $\op{Pl}^m(\ovop{M}_{0,n})$, and $\op{Nef}^m(\ovop{M}_{0,n})$ are generated by classes that come from vector bundles of conformal blocks.  Swinarski showed that conformal blocks divisors for $\sL(2)$ do not cover the whole nef cone of $\ovop{M}_{0,6}$.  However, as Fakhruddin comments in \cite{Fakh},  some computations  suggest that the cone of divisors spanned by conformal blocks for $\sL(2)$ is strictly contained in that generated by divisors of $\sL(3)$.  \end{remark}

\subsection{The $\op{S}_n$-invariant case}\label{OtherBasis}
One may use products of $m$ basis elements for $\operatorname{Pic}(\operatorname{M}_{0,n})^{\operatorname{S}_n}$ to generate  subcones of the Pliant cone 
$\operatorname{Pl}^m(\operatorname{M}_{0,n})^{\operatorname{S}_m}$.  Unlike with Keel's result for $\ovop{M}_{0,n}$, one does not have that 
$\op{A}^m(\ovop{M}_{0,n})^{\op{S}_n}$ is generated by $\op{A}^1(\ovop{M}_{0,n})^{\op{S}_n}$.  For example, while $\op{A}^1(\ovop{M}_{0,7})^{\op{S}_7}$  has 2 generators (see the table below), one can check that the dimension of
$\op{A}^2(\ovop{M}_{0,7})^{\op{S}_7}$ is $4$.

 Two  basis for $\op{Pic}(\op{M}_{0,n})^{\op{S}_n}$ are given by the following sets: If $n=2(g+1)$ is even, then by \cite{ags}
$$\mathcal{B}_1=\{\mathbb{V}(\sL(2), \omega_1^n, \ell ) : 1 \le  \ell \le g\},$$
and if  $n=2(g+1)+1$ is odd, 
$$\mathcal{B}_1=\{\mathbb{V}(\sL(2), \{\omega_1^{n-1},0\}, \ell ) : 1 \le  \ell \le g\}$$
is a basis for $\op{Pic}(\ovop{M}_{0,n})^{\op{S}_n}$.  And at least for $n \le 2,000$, by \cite{agss} and a computer check,
$$\mathcal{B}_2=\{\mathbb{V}(\sL(n), \{\omega_i^n\}, 1 ) : 2 \le  i \le \lfloor \frac{n}{2} \rfloor\}.$$
is a basis for $\op{Pic}(\ovop{M}_{0,n})^{\op{S}_n}$.  The second basis $\mathcal{B}_2$ may be more interesting,  as was shown in \cite{agss} all of its elements span extremal rays of the nef cone $\op{Nef}^1(\ovop{M}_{0,n})^{\op{S}_n}$.  All elements of $\mathcal{B}_2$ are of level one, and hence rank one, and so the only nontrivial classes will be products of first Chern classes.   In contrast, only the first Chern classes of bundles with $\ell \in \{1,2,g-1,g\}$ span extremal rays of the nef cone.  In particular, starting at $n=12$, the $\mc{B}_2$ gives more extremal rays of the nef cone than does $\mc{B}_1$.  Elements of $\mathcal{B}_1$ generally have rank greater than one, so products of first Chern classes will sometimes be equivalent to higher Chern classes: For instance, 
by the Critical Level Identity, $$c_1(\mathbb{V}(\sL(2), \{\omega_1^{2(g+1)}\}, g ))^k=c_k(\mathbb{V}(\sL(2), \{\omega_1^{2(g+1)}\}, g )).$$ We give a few  explicit examples below.

\bigskip
\bigskip

\begin{tabular}{c c c c c  c}

\hline 
n  & $[B_2,B_3]$ &  $\mathbb{V}$  & rank\ &  $\mathbb{V}$  & rank \\
\hline

 & &  &  \\
 $6$ &$[2,1]$  & $\alpha_1=c_1(\mathbb{V}(\sL(2),\{\omega_1^6\},1))$ & 1 & $\alpha_1=c_1(\mathbb{V}(\sL_{6},\{\omega_3^6\},1))$ & 1 \\
$6$ &$[1,3]$  & $\alpha_2=c_1(\mathbb{V}(\sL(2),\{\omega_1^6\},2))$ & 2 &  $\alpha_2=c_1(\mathbb{V}(\sL_{6},\{\omega_2^6\},1))$ & 1  \\
$7$ &$[1,1]$  & $\beta_1=c_1(\mathbb{V}(\sL(2),\{\omega_1^6,0\},1))$ & 1 & $\beta_1=c_1(\mathbb{V}(\sL_{7},\{\omega_3^7\},1))$ &  1 \\
$7$ &$[1,3]$  & $\beta_2=c_1(\mathbb{V}(\sL(2),\{\omega_1^6,0\},2))$ & 4 & $\beta_2=c_1(\mathbb{V}(\sL_{7},\{\omega_2^7\},1))$ &  1  \\
$8$ &$[3,2,4]$  & $\gamma_1=c_1(\mathbb{V}(\sL(2),\{\omega_1^8\},1))$  &  1 & $\gamma_1=c_1(\mathbb{V}(\sL_{8},\{\omega_4^8\},1))$  &   1 \\
$8$ &$[6,11,8]$  & $\gamma_2=c_1(\mathbb{V}(\sL(2),\{\omega_1^8\},2))$ & 8  & $\gamma_2=c_1(\mathbb{V}(\sL_{8},\{\omega_3^8\},1))$ &  1  \\
$8$ &$[1,3,6]$  & $\gamma_3=c_1(\mathbb{V}(\sL(2),\{\omega_1^8\},3))$  &   13 & $\gamma_3=c_1(\mathbb{V}(\sL_{8},\{\omega_2^8\},1))$  &  1\\
\end{tabular}

\bigskip
In the table above, $B_2=\sum_{ij}\delta_{ij}$, and $B_3=\sum_{ijk}\delta_{ijk}$.
\bigskip
\bigskip

Elements in $\op{Pl}^2(\ovop{M}_{0,7})^{\op{S}_7}$ will be given by products of extremal rays of the cone of $\op{S}_7$-invariant nef divisors:
$$(\pi_7^*\alpha_1)^2=\beta_1^2, \ \ (\pi_7^*\alpha_1)\cdot (\pi_7^*\alpha_2)=\beta_1 \cdot \beta_2, \ \  \mbox{ and } \  (\pi_7^*\alpha_2)^2=\beta_2^2.$$
Moreover: $c_1(\mathbb{V}(\sL(2),\{\omega_1^6,0\},2))^2=c_2(\mathbb{V}(\sL(2),\{\omega_1^6,0\},2))$, by the Critical Level identity.  Elements in $\op{Pl}^2(\ovop{M}_{0,8})^{\op{S}_8}$ will be given by $\gamma_1^2$, $ \gamma_2^2$, $\gamma_3^2$, $\gamma_1\cdot \gamma_2$, $\gamma_1\cdot \gamma_3$, and $\gamma_2\cdot \gamma_3$.  
Again  by the Critical Level identity, $c_1(\mathbb{V}(\sL(2),\{\omega_1^8\},2))^2=c_2(\mathbb{V}(\sL(2),\{\omega_1^8\},2))$.

\section{Some questions}
\subsection{Powers of first Chern classes}
As with the generators of  these subcones of  $\op{Pl}^m(\ovop{M}_{0,n})$,  we have seen in many examples, that  $m$-th Chern classes are often a product of first Chern classes.   This is not expected:
 If $\mathbb{V}$ is a vector bundle with a projectively flat connection on a smooth projective variety $X$ with trivial fundamental group, then  $c_m(\mathbb{V})=\frac{1}{R^m}{R \choose m} \ c_1(\mathbb{V})^m$, where $R=\op{rk}(\mathbb{V})$.  
The spaces  $\ovop{M}_{0,n}$ are simply connected \cite{BP}, and vector bundles of conformal blocks carry a projectively flat connection on the interior $\op{M}_{0,n}$ \cite{TUY}.  The connection for a  vector bundle of conformal blocks on $\op{M}_{0,n}$ does not extend to a projectively flat connection on the boundary.  It would be interesting to know which bundles have this property, and what it might mean geometrically.

\subsection{Quasi-polynomiality}
 In \cite{BG}, Belkale and Gibney show that  The Chern classes of $\mathbb{V}(\sL(r),\ell)$ on $\ovmc{M}_g$ are quasi-polynomial in l for sufficiently large $\ell$.  It would be interesting to know if the same is true on $\ovop{M}_{0,n}$.

\subsection{Nonvanishing} As was first discovered by Fakhruddin, there are examples of bundles of positive rank, whose first Chern classes are unexpectedly zero.     The classes $c_m(\mathbb{V})$ are positive in the sense that they nonnegatively intersect all effective cycles of dimension $m$ on $\ovop{M}_{0,n}$.  However, as can be seen in the expression for these classes given in Theorem \ref{Chern}, these are alternating sums  of  products of $\psi_i$ and boundary classes, and it is not clear at all when they are going to be nonzero.  It is natural to ask, as was done in \cite{BGMB} for first Chern classes, whether necessary and sufficient conditions can be given so that the $c_m(\mathbb{V})$ is nonzero\footnote{Such criteria exist for first Chern classes of vector bundles of conformal blocks for $\sL(2)$ in \cite{BGMB}.}.  

\subsection{Acknowledgements}
We thank P. Belkale, who was our coauthor on  the results that  here we have formally extended to higher Chern classes. We also thank Izzet Coskun, Mihai Fulger, Brian Lehmann,  Alina Marian, and John Ottem for useful discussions and clarifications.  AG was supported by  NSF  DMS-1601909 and DMS-1344994. SM was supported in part by a Simons Travel Grant and by NSF DMS-1361159 (PI: Patrick Brosnan).
\begin{bibdiv}
\begin{biblist}

\bib{ags}{article}{
   author={Alexeev, Valery},
   author={Gibney, Angela},
   author={Swinarski, David},
   title={Higher-level $\germ{sl}_2$ conformal blocks divisors on
   $\overline M_{0,n}$},
   journal={Proc. Edinb. Math. Soc. (2)},
   volume={57},
   date={2014},
   number={1},
   pages={7--30}
  }

\bib{agss}{article}{
  author={Arap, Maxim},
  author={Gibney, Angela},
  author={Stankewicz, Jim},
  author={Swinarski, David},
  title={$\sL _n$ level $1$ Conformal blocks divisors on $\overline {\operatorname {M}}_{0,n}$},
  journal={International Math Research Notices},
  date={2011}
  }
\bib{BF}{article}{
  author={Belkale, Prakash},
  author={Fakhruddin, Najmuddin},
  title={Triviality properties of principal bundles on singular curves},
  date={2016},
  note={arXiv:1509.06425v3 [math.AG]},
}

\bib{Beauville}{article}{
  author={Beauville, Arnaud},
  title={Conformal blocks, fusion rules and the Verlinde formula},
  conference={ title={}, address={Ramat Gan}, date={1993}, },
  book={ series={Israel Math. Conf. Proc.}, volume={9}, publisher={Bar-Ilan Univ.}, place={Ramat Gan}, },
  date={1996},
  pages={75--96}
}

\bib{BGMA}{article}{
   author={Belkale, Prakash},
   author={Gibney, Angela},
   author={Mukhopadhyay, Swarnava},
   title={Vanishing and identities of conformal blocks divisors},
   journal={Algebr. Geom.},
   volume={2},
   date={2015},
   number={1},
   pages={62--90}
}
		
\bib{BGMB}{article}{
   author={Belkale, P.},
   author={Gibney, A.},
   author={Mukhopadhyay, S.},
   title={Nonvanishing of conformal blocks divisors on $\overline M_{0,n}$},
   journal={Transform. Groups},
   volume={21},
   date={2016},
   number={2},
   pages={329--353}
}

\bib{BG}{article}{
  author={Belkale, Prakash},
  author={Gibney, Angela},
  title={On finite generation of the section ring of the determinant of cohomology line bundle},
  journal={arXiv:1606.08726},
  %volume={142},
  date={2016},
  %number={},
  %pages={},
}
\bib{BP}{article}{
  author={Boggi, M},
  author={Pikaart, M.},
  title={Galois covers of moduli of curves},
  journal={Comp. Math.},
  volume={120},
  date={2000},
  %note={In progress}
  %number={},
  pages={171-191}
  %issn={0894-0347},
  %review={},
  %doi={},
}

\bib{CLO}{article}{
   author={Coskun, Izzet},
   author={Lesieutre, John},
   author={Ottem, John Christian},
   title={Effective cones of cycles on blow-ups of projective space},
   journal={arXiv:1603.04808},
   %volume={},
   date={2016},
   %number={},
   pages={1--26}
}

\bib{DELV}{article}{
   author={Debarre, Olivier},
   author={Ein, Lawrence},
   author={Lazarsfeld, Robert},
   author={Voisin, Claire},
   title={Pseudoeffective and nef classes on abelian varieties},
   journal={Compos. Math.},
   volume={147},
   date={2011},
   number={6},
   pages={1793--1818}
}

\bib{EV}{article}{
author={Esnault, Helene},
  author={Viehweg, Eckhart},
  title={Logarithmic De-Rham complexes},
  journal={Inventiones Mathematica},
  date={1980}
}

\bib{Fakh}{article}{
   author={Fakhruddin, Najmuddin},
   title={Chern classes of conformal blocks},
   conference={
      title={Compact moduli spaces and vector bundles},
   },
   book={
      series={Cont. Math.},
      volume={564},
      publisher={Amer. Math. Soc.},
      place={Providence, RI},
   },
   date={2012},
   pages={145--176}
}

\bib{LF}{article}{
  author={Fulger, Mihai},
  author={Lehmann, Brian},
  title={Positive cones of dual cycle classes},
  journal={arXiv:1408.5154 [math.AG]},
  date={2014}
}

\bib{FSV2}{article} {
    author = {Feigin, B. L.},
    author= {Schechtman, V. V.},
    author={Varchenko, A. N.},
     TITLE= {On algebraic equations satisfied by correlators in
              {W}ess-{Z}umino-{W}itten models},
   JOURNAL = {Lett. Math. Phys.},
  %FJOURNAL = {Letters in Mathematical Physics. A Journal for the Rapid Dissemination of Short Contributions in the Field of Mathematical Physics},
    VOLUME = {20},
      YEAR = {1990},
    NUMBER = {4},
     PAGES = {291--297},
       URL = {http://dx.doi.org/10.1007/BF00626525},
}

\bib{Kaz}{article}{
   author={Kazanova, Anna},
   title={On $S_n$-invariant conformal blocks vector bundles of rank one
   on $\overline{M}_{0,n}$},
   journal={Manuscripta Math.},
   volume={149},
   date={2016},
   number={1-2},
   pages={107--115}
}

\bib{KeelThesis}{article}{
   author={Keel, Sean},
   title={Intersection theory of moduli space of stable $n$-pointed curves
   of genus zero},
   journal={Trans. Amer. Math. Soc.},
   volume={330},
   date={1992},
   number={2},
   pages={545--574}
}

\bib{LO}{article}{
   author={Lesieutre, John},
   author={Ottem, John Christian},
   title={Curves disjoint from a nef divisor},
   journal={Michigan Math. J.},
   volume={65},
   date={2016},
   number={2},
   pages={321--332}
}

\bib{M2}{article}{
          author = {Grayson, Daniel R.},
          author={Stillman, Michael E.},
          title = {Macaulay2, a software system for research
                   in algebraic geometry},
         note= {Available at
           \texttt{http://www.math.uiuc.edu/Macaulay2/}}
        }

\bib{Muk3}{article}{
    AUTHOR = {Mukhopadhyay, Swarnava},
     TITLE = {Rank-level duality and conformal block divisors},
   JOURNAL = {Adv. Math.},
  %FJOURNAL = {Advances in Maths},
    VOLUME = {287},
      YEAR = {2016},
     PAGES = {389--411}
}

\bib{MOPPZ}{article}{
  author={Marian, A.},
  author={Oprea, D.},
  author={Pandharipande, R.},
  author={Pixton, A.},
  author={Zvonkine, D.},
  title={The Chern character of the Verlinde bundle over the moduli space of stable curves},
  journal={arXiv:1311.3028 [math.AG]},
  %volume={},
  date={2014},
  note={to appear in Journal f\"ur die reine und angewandte Mathematik},
  %pages={},
  %issn={},
  %review={},
  %doi={},
}

\bib{Mead}{article}{
   author={Mead, D. G.},
   title={Newton's identities},
   journal={Amer. Math. Monthly},
   volume={99},
   date={1992},
   number={8},
   pages={749--751}}

\bib{Ottem}{article}{
   author={Ottem, John},
   title={Nef cycles on some hyperk\"ahler fourfolds},
    journal={arXiv:1505.01477},
   %volume={},
   date={2015},
   %number={},
   pages={1--12}
   }

\bib{Schur}{article}{
		author={Schenck, Hal},
		author={Stillman, Michael},
		title={\texttt{\upshape SchurRings}: a Macaulay2 package make computations in the representation ring of GL(n) possible},
		date={2007},
		note={Version 0.2, {http://www.math.uiuc.edu/Macaulay2/}},
}
	\bib{ConfBlocks}{article}{
		author={Swinarski, David},
		title={\texttt{\upshape ConformalBlocks}: a Macaulay2 package for computing conformal block divisors},
		date={2010},
		note={Version 1.1, {http://www.math.uiuc.edu/Macaulay2/}},
}

\bib{Sorger}{article}{
   author={Sorger, Christoph},
   title={La formule de Verlinde},
   language={French, with French summary},
   note={S\'eminaire Bourbaki, Vol.\ 1994/95},
   journal={Ast\'erisque},
   number={237},
   date={1996},
   pages={Exp.\ No.\ 794, 3, 87--114},
   issn={0303-1179},
   review={\MR{1423621}},
}

\bib{TUY}{article}{
  author={Tsuchiya, Akihiro},
  author={Ueno, Kenji},
  author={Yamada, Yasuhiko},
  title={Conformal field theory on universal family of stable curves with gauge symmetries},
  conference={ title={Integrable systems in quantum field theory and statistical mechanics}, },
  book={ series={Adv. Stud. Pure Math.}, volume={19}, publisher={Academic Press}, place={Boston, MA}, },
  date={1989},
  pages={459--566}
}

\bib{Tsu}{article}{
    AUTHOR = {Tsuchimoto, Yoshifumi},
     TITLE = {On the coordinate-free description of the conformal blocks},
   JOURNAL = {J. Math. Kyoto Univ.},
    VOLUME = {33},
      YEAR = {1993},
    NUMBER = {1},
     PAGES = {29--49}
}

\bib{VoisinNef}{article}{
  author={Voisin, Claire},
  title={Coniveau 2 complete intersections and effective cones},
  journal={Geom. Funct. Anal.},
  volume={19},
  date={2010},
  number={5},
  pages={1494--1513}
}

\end{biblist}
\end{bibdiv}

\vspace{0.1 in}
\noindent
A.G.: Department of Mathematics, University of Georgia, Athens, GA 30602\\
email: agibney@math.uga.edu

\vspace{0.2 cm}

\noindent
S.M.: Department of Mathematics, University of Maryland,
College Park, MD 20742\\
email: swarnava@umd.edu

\end{document}